\newtheorem{thm}{Theorem}[section]
\newtheorem{cor}{Corollary}[section]
\theoremstyle{definition}
\begin{document}
\thispagestyle{empty}
\bigskip
 {\noindent \Large\bf\sc distribution of some functionals for a l\'{e}vy process with matrix-exponential
jumps of the same sign }\footnotemark[1]\footnotetext{This is an electronic reprint
  of the original article published in Theory of Stochastic Processes,
  Vol. 19(35), No. 1 (2014), 26-36. This reprint differs from the original
in pagination and typographic detail.}
\begin{center}
Ie.~Karnaukh
\end{center}
 \footnote{Department of Statistics and Probability Theory,
Dnipropetrovsk Oles Honchar National University,\\ 72, Gagarina pr.,
Dnipropetrovsk 49010, Ukraine.
 \href{mailto:ievgen.karnaukh@gmail.com}{ievgen.karnaukh@gmail.com}
 }
 \footnotemark[2]\footnotetext{The author would like to thank Prof. Dmytro Husak
for the useful comments, remarks and proofreading several draft versions of the paper.}
\begin{center}
\begin{quotation}
\noindent  {\small This paper provides a framework for investigations
in fluctuation theory for L\'evy processes with matrix-exponential jumps.
We present a matrix form of the components of the infinitely divisible factorization.
Using this representation we establish generalizations of some results known
for compound Poisson processes with exponential jumps in one
 direction and generally distributed jumps in the
other direction.}
\end{quotation}
\end{center}
\bigskip

L\'evy processes have many applications in practice as a base model in
risk theory, queuing and financial mathematics. Many problems can be connected to the fluctuation theory,
in which the factorization method plays a crucial role (see, for
instance, \cite{Kyprianou2006}).

The most studied class of L\'evy processes is the class of semi-continuous
processes (with L\'evy measure supported on a half-axis).
One of the factorization components
for the semi-continuous processes is entirely defined by the real root of the
cumulant equation
(or more specifically, by the right-inverse of the cumulant function).

This result can be generalized for L\'evy processes with
 matrix-exponential upward (or downward) jumps
 (see \cite{Bratiychuk1990, Lewis2008}
and references therein). For such processes one of the factorization components is
a rational function with finite number of poles, which are the (possibly complex)
 roots of the
cumulant equation. Using the properties of matrix-exponential distribution we can invert the
component to find the distribution of corresponding killed extremum.
 The convolution of the distribution
of this extremum and the integral transform of the L\'evy measure defines
the moment generating
function of other extremum.

We use the relations for the factorization components
to obtain in closed form the moment generating function
 of occupation time of a half-line (for semi-continuous case we refer to~\cite{Landriault2011}
  and for some other cases to \cite{Husak2011engl}).
Further generalization could be done for meromorphic L\'evy processes with the main difference
that the factorization components have infinitely many poles (see~\cite{Kuznetsov2012}).

\section{Matrix-exponential distribution}

The class of matrix-exponential (ME) distributions is the generalization
of exponential distribution and it comprises the phase-type distributions.
The ME class can be defined as a class of distributions
with a rational moment generating function (see \cite{Asmussen2010}).
The properties of ME distributions allow us to find in the closed form some generalizations
of the results known for L\'evy processes with exponential jumps.

A nonnegative random variable has a ME$\left(d\right)$ distribution
$\left(d\geq1\right)$, if its cumulative distribution function is
as follows
\[
F\left(x\right)=\begin{cases}
1+\boldsymbol{\beta}e^{\mathbf{R}x}\mathbf{R}^{-1}\mathbf{t} & x>0;\\
0 & x\leq0,
\end{cases}
\]
 where $\boldsymbol{\beta}$ is a $1\times d$ vector,
$\mathbf{R}$ is a non singular $d\times d$ matrix, $\mathbf{t}$
is a $d\times 1$ vector, and each possibly have complex entries.
The triple $\left(\boldsymbol{\beta},{\mathbf{R}},{\mathbf{t}}\right)$
is called a representation of the ME distribution. Note that, the same
distribution may have several representations. If the cumulative
function of a ME distribution can be represented as $F\left(x\right)=
1-\boldsymbol{\alpha}e^{\mathbf{T}x}\mathbf{e}, x>0$,
 where $\boldsymbol{\alpha}$
is a probability
vector and $\mathbf{T}$ is the intensity matrix of a Markov chain,
 $\mathbf{e}=\left(0,\ldots,0,1\right)^{\top}$, then the ME
distribution is called phase-type distribution. For details and general
results on matrix-exponential distributions we refer to \cite{Fackrell2003}.

For $x>0$ a ME distribution has density $f\left(x\right)=
\boldsymbol{\beta}e^{\mathbf{R}x}\mathbf{t}$,
which can be rewritten as (see \cite{Bratiychuk1990} and \cite{Lewis2008}):
\[
f\left(x\right)=\sum_{i=1}^{m}P_{i}\left(x\right)e^{-r_{i}x},
\]
 where $P_{i}\left(x\right)$ are polynomials of degree $k_{i}$,
$\Re[r_{m}]\geq\ldots\geq\Re[r_{2}]>r_{1}>0$ and $\sum_{i=1}^{m}k_{i}+m=d$.

If $p_{0}=1+\boldsymbol{\beta}\mathbf{R}^{-1}\mathbf{t}\neq0$, then the
ME distribution has nonzero mass at zero, and the moment generating function has the form
\[
\int_{0}^{\infty}e^{rx}dF\left(x\right)=p_{0}
-\boldsymbol{\beta}\left(r\mathbf{I}+\mathbf{R}\right)^{-1}\mathbf{t},
\quad\Re[r]=0.
\]
To find a representation of the distribution with known moment generating function
we can follow the approach given in \cite{Asmussen2010}.
Denote the vectors $\boldsymbol{\rho}=\left(\rho_{d},\ldots,\rho_{1}\right),$
$\left(\boldsymbol{\rho},1\right)=\left(\rho_{d},\ldots,\rho_{1},1\right)$,
$\mathbf{h}_{d}\left(r\right)=\left(1,r,\ldots,r^{d}\right)^{\top}$.
If the Laplace transform of $f\left(x\right)$ has the form
\begin{equation}\label{eq:1.1}
\int_{0}^{\infty}e^{-rx}f\left(x\right)dx=\frac{\beta_{1}r^{d-1}
+\beta_{2}r^{d-2}+\ldots+\beta_{d}}{r^{d}+\rho_{1}r^{d-1}+\ldots
+\rho_{d-1}r+\rho_{d}}=\frac{\boldsymbol{\beta}\mathbf{h}_{d-1}
\left(r\right)}{\left(\boldsymbol{\rho},1\right)\mathbf{h}_{d}\left(r\right)},
\end{equation}
then the corresponding density is $f\left(x\right)=\boldsymbol{\beta}e^{\mathbf{R}x}\mathbf{e}$,
$x>0$, where $\boldsymbol{\beta}=\left(\beta_{d},\ldots,\beta_{1}\right),$
$\mathbf{e}=\left(0,\ldots,0,1\right)^{\top},$ and
$\mathbf{R}=\begin{pmatrix}0 & 1 & \ldots & 0\\
\vdots & \vdots & \ddots & \vdots\\
0 & 0 & \ldots & 1\\
-\rho_{d} & -\rho_{d-1} & \ldots & -\rho_{1}
\end{pmatrix}=\begin{pmatrix}0\quad\mathbf{I}\\
-\boldsymbol{\rho}
\end{pmatrix}$.

In the case when a ME distribution is defined on negative
half-axis, we can
follow the similar reasoning. If
\begin{equation}\label{eq:1.2}
\int_{-\infty}^{0}e^{rx}f\left(x\right)dx=
\frac{\boldsymbol{\beta}\mathbf{h}_{d-1}\left(r\right)}
{\left(\boldsymbol{\rho},1\right)\mathbf{h}_{d}\left(r\right)},
\end{equation}
then  $f\left(x\right)=\boldsymbol{\beta}e^{\mathbf{R}x}\mathbf{e}$,
$x<0$, where $\boldsymbol{\beta}=\left(\beta_{d},\ldots,\beta_{1}\right),$
$\mathbf{R}=\begin{pmatrix}0\quad-\mathbf{I}\\
\boldsymbol{\rho}
\end{pmatrix}$. If a ME distribution has support on the entire real line,
then it is called bilateral matrix-exponential distribution (see~\cite{Bladt2012}).

\section{Extrema and overshoot}

Let us suppose that $X_{t},t\geq0$ is a L\'evy process with cumulant
function
\[
k\left(r\right)=a'r+\frac{\sigma^{2}}{2}r^{2}+
\int_{-\infty}^{\infty}\left(e^{rx}-1-rxI_{\left\{ |x|\leq1\right\} }\right)
\Pi\left(dx\right),
\]
where $a'$ is a real constant, $\sigma>0$, and $\Pi$ is
a non negative
measure, defined on $R\backslash\{0\}$:
$\int_{R}\min\left\{ x^{2},1\right\} \Pi\left(dx\right)<\infty.$

Throughout we impose the restriction that
$\int_{-1}^{1}|x|\Pi\left(dx\right)<\infty,$
then the cumulant function can be represented as
 follows
\begin{equation}\label{eq:2.1}
k\left(r\right)=ar+\frac{\sigma^{2}}{2}r^{2}+
\int_{-\infty}^{\infty}
\left(e^{rx}-1\right)\Pi\left(dx\right),
\end{equation}
where $a=a'-\int_{-1}^{1}|x|\Pi\left(dx\right).$

Denote by $\theta_{s}$ an exponential random variable with parameter
$s>0$: $\mathsf{P}\left\{ \theta_{s}>t\right\} =e^{-st},t>0,$ independent
of process $X_{t},$ and by definition $\theta_{0}=\infty.$ Then $X_{\theta_{s}}$
is called a L\'evy process killed at rate $s$ (see \cite{Bertoin1996}).
For the moment generating function of $X_{\theta_{s}}$
\[
\mathsf{E}e^{rX_{\theta_{s}}}=\frac{s}{s-k\left(r\right)}
\]
the identity of infinitely divisible factorization takes place
\[
\mathsf{E}e^{rX_{\theta_{s}}}=\mathsf{E}e^{rX_{\theta_{s}}^{+}}\mathsf{E}e^{rX_{\theta_{s}}^{-}},\Re\left[r\right]=0,
\]
where $X_{\theta_{s}}^{+}=\sup_{0\leq t\leq\theta_{s}}X_{t}$, $X_{\theta_{s}}^{-}=\inf_{0\leq t\leq\theta_{s}}X_{t}$
are the supremum and infimum of the process, respectively, killed
at rate $s$.

In general case the closed formulae for factorization components are not known, so
we should impose additional restrictions on the parameters of the process.
Following~\cite{Lewis2008}, we consider L\'evy processes that have finite intensity negative (or positive)
jumps with ME distribution, arbitrary
positive (negative) jumps, and possibly drift and gaussian component.

That is, we assume that
$\Pi\left(dx\right)=\lambda_{-}\sum_{i=1}^{m_{-}}P_{i}^{-}\left(x\right)e^{b_{i}x}dx$, $x<0$ (or correspondingly $\Pi\left(dx\right)=\lambda_{+}\sum_{i=1}^{m_{+}}P_{i}^{+}\left(x\right)e^{-c_{i}x}dx$,
$x>0$)
where $\lambda_{\pm}=\int_{R^{\pm}}\Pi\left(dx\right)<\infty$,
$P_{i}^{\pm}\left(x\right)$
are the polynomials of degree $k_{i}^{\pm}$,
$\sum_{i=1}^{m_{\pm}}k_{i}^{\pm}+m_{\pm}=d_{\pm}$,
$\Re[b_{m_{-}}]\geq\ldots\geq\Re[b_{2}]>b_{1}>0$ and
 $\Re[c_{m_{+}}]\geq\ldots\geq\Re[c_{2}]>c_{1}>0$.
Also we split two cases (for details, see~\cite{Lewis2008}):
\begin{description}
  \item[$\left(NS\right)_{\pm}$] $\sigma>0$ or $\sigma=0,\pm a>0$,
  \item[$\left(S\right)_{\pm}$] $\sigma=0,\mp a\geq0$,
\end{description}
where sign '+'  corresponds to the case when positive jumps have ME distribution
while sign '--' corresponds to the case when negative jumps have ME distribution.

Due to \cite{Bratiychuk1990,Lewis2008}, in any of the cases
$\left(NS\right)_{\pm}$ or $\left(S\right)_{\pm}$ the moment generating
function
$\mathsf{E}e^{rX_{\theta_{s}}^{\pm}}$ is a rational function
and $X_{\theta_{s}}^{\pm}$ has a ME distribution. In addition, the
cumulant equation
\[
k\left(r\right)=s
\]
has the roots $\left\{ \pm r_{i}^{\pm}
\left(s\right)\right\} _{i=1}^{N_{\pm}}$
in half-plane $\pm\Re[r]>0$,
 where $N_{\pm}=\begin{cases}
d_{\pm}+1 & \left(NS\right)_{\pm},\\
d_{\pm} & \left(S\right)_{\pm},
\end{cases}$ $r_{1}^{+}\left(s\right)$
is the unique root on $\left[0,c_{1}\right]$
($-r_{1}^{-}\left(s\right)$ is the unique root
 on $\left[-b_{1},0\right]$).
These roots entirely define the distribution of corresponding extrema.

Write
\begin{gather*}
\beta_{k}^{-}=\sum_{1\leq i_{1}<\ldots<i_{k}\leq N_{-}}b_{i_{1}}
\ldots b_{i_{k}},\beta_{k}^{+}=\sum_{1\leq i_{1}<\ldots<i_{k}\leq
N_{+}}c_{i_{1}}\ldots c_{i_{k}},\\\rho_{k}^{\pm}\left(s\right)=
\sum_{1\leq i_{1}<\ldots<i_{k}\leq N_{\pm}}r_{i_{1}}^{\pm}\left(s\right)
\ldots r_{i_{k}}^{\pm}\left(s\right),
\end{gather*}
then the distribution of $X_{\theta_{s}}^{\pm}$ can be represented by
the parameters:
\[
\boldsymbol{\beta}_{\pm}=\left(\beta_{d_{\pm}}^{\pm},
\ldots,\beta_{1}^{\pm}\right),\boldsymbol{\rho}_{\pm}\left(s\right)=
\left(\rho_{N_{\pm}}^{\pm}\left(s\right),\ldots,
\rho_{1}^{\pm}\left(s\right)\right),\mathbf{\mathbf{R}}_{\pm}\left(s\right)
=\begin{pmatrix}0\quad\mathbf{\pm I}\\
\mp\boldsymbol{\rho}_{\pm}\left(s\right)
\end{pmatrix}.
\]
 Under additional conditions the moment generating function of $X_{\theta_{s}}^{\mp}$ we
can determine in terms of integral transforms of the L\'evy measure:
\[
\overline{\Pi}^{+}\left(x\right)=\int_{x}^{\infty}\Pi\left(dx\right),x>0;
\overline{\Pi}^{-}\left(x\right)=\int_{-\infty}^{x}\Pi\left(dx\right),x<0;
\tilde{\Pi}^{\pm}\left(r\right)=\int_{R^{\pm}}e^{rx}
\overline{\Pi}^{\pm}\left(x\right)dx.
\]
The following statement is essentially based on the results given
in \cite{Bratiychuk1990}
and \cite{Lewis2008}.
\begin{thm}\label{thm:2.1}
If for L\'evy process $X_t$ one of the cases $\left(NS\right)_{-}$ or
$\left(S\right)_{-}$ holds, then
\begin{equation}\label{eq:2.2}
P'_{-}\left(s,x\right)=\frac{\partial}{\partial x}\mathsf{P}
\left\{ X_{\theta_{s}}^{-}<x\right\} =\mathbf{q}_{-}\left(s\right)
e^{\mathbf{R}_{-}\left(s\right)x}\mathbf{e},x<0,
\end{equation}
where
\begin{equation}\label{eq:2.3}
\mathbf{q}_{-}\left(s\right)=\begin{cases}
\frac{\rho_{d_{-}+1}^{-}\left(s\right)}{\beta_{d_{-}}^{-}}
\left(\boldsymbol{\beta}_{-},1\right) & \left(NS\right)_{-},\\
\frac{\rho_{d_{-}}^{-}\left(s\right)}{\beta_{d_{-}}^{-}}
\left(\boldsymbol{\beta}_{-}-\boldsymbol{\rho}_{-}\left(s\right)\right)
& \left(S\right)_{-}.
\end{cases}
\end{equation}
Moreover, in case $\left(S\right)_{-}$:
 $p_{-}\left(s\right)=\mathsf{P}\left\{ X_{\theta_{s}}^{-}=0\right\}
  \neq0$
and $p_{-}\left(s\right)=\frac{\rho_{d_{-}}^{-}\left(s\right)}
{\beta_{d_{-}}^{-}}=\left(\prod_{i=1}^{d_{-}}\frac{r_{i}^{-}
\left(s\right)}{b_{i}}\right)$.

If additionally $\mathsf{D}X_{1}<\infty,
\mathsf{E}X_{\theta_{s}}^{+}<\infty$,
the moment generating function of $X_{\theta_{s}}^{+}$
could be represented
as
\begin{equation}\label{eq:2.4}
\mathsf{E}e^{rX_{\theta_{s}}^{+}}=\left(1-\frac{r}{s}
\left(A_{*}^{-}\left(s\right)+\mathsf{E}
e^{rX_{\theta_{s}}^{-}}\tilde{\Pi}^{+}\left(r\right)
-\mathbf{q}_{-}\left(s\right)\left(r\mathbf{I}
+\mathbf{R}_{-}\left(s\right)\right)^{-1}\tilde{\Pi}^{+}
\left(-\mathbf{R}_{-}\left(s\right)\right)\mathbf{e}\right)\right)^{-1},
\end{equation}
\[
A_{*}^{-}\left(s\right)=\left\{ \begin{array}{lc}
\frac{\sigma^{2}}{2}\left.\frac{\partial}{\partial y}
\mathsf{P}\left\{ X_{\theta_{s}}^{-}<y\right\} \right|_{y=0}
 & \sigma>0,\\
\mathsf{P}\left\{ X_{\theta_{s}}^{-}=0\right\}
\max\left\{ 0,a\right\}  & \sigma=0,
\end{array}\right.
=\begin{cases}
\frac{\sigma^2}{2}\frac{\rho_{d_{-}+1}^{-}\left(s\right)}{\beta_{d_{-}}^{-}}& \left(NS\right)_{-},\\
a \frac{\rho_{d_{-}}^{-}\left(s\right)}{\beta_{d_{-}}^{-}}
& \left(S\right)_{-}.
\end{cases}
\]
Denote the first passage time by $\tau_{x}^{+}=
\inf\left\{ t>0:X_{t}>x\right\} $,
then the distribution of discounted overshoot
$X_{\tau_{x}^{+}}-x$, $x>0$,
is defined by
\begin{multline}\label{eq:2.5}
\mathsf{E}\left[e^{-s\tau_{x}^{+}},X_{\tau_{x}^{+}}-x\in dv,
\tau_{x}^{+}<\infty\right]=s^{-1}A_{*}^{-}\left(s\right)
\frac{\partial}{\partial x}\mathsf{P}\left\{ X_{\theta_{s}}^{+}<x\right\}
\delta\left(v\right)dv+\\
s^{-1}\int_{0}^{x}\left(p_{-}\left(s\right)\Pi\left(dv+y\right)
+\int_{y}^{\infty}\Pi\left(dv+z\right)\mathbf{q}_{-}\left(s\right)
e^{\mathbf{R}_{-}\left(s\right)\left(y-z\right)}\mathbf{e}dz\right)
\mathsf{P}\left\{ X_{\theta_{s}}^{+}\in x-dy\right\},
\end{multline}
where $\delta(v)$ is the Dirac delta function.
\end{thm}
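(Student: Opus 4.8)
The plan is to first record, following \cite{Bratiychuk1990,Lewis2008}, that $\varphi_-(r):=\mathsf{E}e^{rX_{\theta_s}^-}$ is rational with denominator $\det(r\mathbf I+\mathbf R_-(s))=\prod_{i=1}^{N_-}(r+r_i^-(s))=(\boldsymbol\rho_-(s),1)\mathbf h_{N_-}(r)$, and then to pin down its numerator by analyticity. Writing the factorization as $\varphi_+(r)=s\,[(s-k(r))\varphi_-(r)]^{-1}$ with $\varphi_+(r):=\mathsf{E}e^{rX_{\theta_s}^+}$ analytic in $\Re[r]<0$ forces every zero of $\varphi_-$ in that half-plane to sit at a pole of $k$; the only such poles are the points $-b_i$ of the negative-jump transform $\int_{-\infty}^0 e^{rx}\Pi(dx)$. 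Combined with the degree count (denominator degree $N_-$, and $\varphi_-(+\infty)=p_-(s)$ equal to $0$ under $(NS)_-$ but nonzero under $(S)_-$), this gives numerator $C\prod_{i=1}^{d_-}(r+b_i)=C(\boldsymbol\beta_-,1)\mathbf h_{d_-}(r)$, consistent with the representation parameters $\boldsymbol\beta_-$ supplied above. The constant follows from $\varphi_-(0)=1$: one gets $C=\rho_{d_-+1}^-(s)/\beta_{d_-}^-$ under $(NS)_-$, while under $(S)_-$ the limit $r\to+\infty$ identifies the atom $p_-(s)=C=\rho_{d_-}^-(s)/\beta_{d_-}^-=\prod_i r_i^-(s)/b_i$. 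Inverting the resulting rational transform through the correspondence \eqref{eq:1.2} delivers \eqref{eq:2.2} and the two cases of $\mathbf q_-(s)$ in \eqref{eq:2.3}.

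For \eqref{eq:2.4} I would start again from $\varphi_+(r)^{-1}=s^{-1}(s-k(r))\varphi_-(r)$ and split the cumulant as $k(r)=k_-(r)+r\tilde\Pi^+(r)$, where $k_-(r)=ar+\tfrac{\sigma^2}{2}r^2+\int_{-\infty}^0(e^{rx}-1)\Pi(dx)$ and the identity $\int_0^\infty(e^{rx}-1)\Pi(dx)=r\tilde\Pi^+(r)$ comes from one integration by parts. The term $r\tilde\Pi^+(r)\varphi_-(r)$ produced here is exactly the middle term of \eqref{eq:2.4}, so the whole statement reduces to the master identity
\[
(s-k_-(r))\varphi_-(r)=s-r\,A_*^-(s)+r\,\mathbf q_-(s)(r\mathbf I+\mathbf R_-(s))^{-1}\tilde\Pi^+(-\mathbf R_-(s))\mathbf e .
\]
Both sides are rational in $r$. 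On the left the poles of $k_-$ at $-b_i$ are cancelled by the numerator zeros of $\varphi_-$ found above, so the only poles are at $-r_i^-(s)$; on the right $\tilde\Pi^+(-\mathbf R_-(s))=\int_0^\infty e^{-\mathbf R_-(s)x}\overline\Pi^+(x)\,dx$ is a constant matrix whose eigenvalue on the $r_i^-(s)$-eigenspace is the scalar $\tilde\Pi^+(-r_i^-(s))$, so $(r\mathbf I+\mathbf R_-)^{-1}\tilde\Pi^+(-\mathbf R_-)$ likewise has simple poles exactly at $-r_i^-(s)$. Matching the principal parts there works because $k(-r_i^-(s))=s$ yields $s-k_-(-r_i^-(s))=-r_i^-(s)\tilde\Pi^+(-r_i^-(s))$, so the residue of $(s-k_-)\varphi_-$ coincides with that of the matrix term through the common spectral projection of $\mathbf R_-(s)$. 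Hence the difference of the two sides is an entire rational function; comparing the $O(r)$ coefficients as $r\to\infty$ forces $A_*^-(s)$ to the stated value (here the hypotheses $\mathsf{D}X_1<\infty$, $\mathsf{E}X_{\theta_s}^+<\infty$ secure the requisite linear growth), and evaluating at $r=0$, where both sides equal $s$, shows the difference vanishes identically.

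I expect this matrix residue matching to be the main obstacle: one must treat the matrix argument $\tilde\Pi^+(-\mathbf R_-(s))$ rigorously through the eigenstructure of $\mathbf R_-(s)$ (or a Jordan form when the $r_i^-(s)$ coincide) and reconcile it with the scalar cumulant relation at each root, while simultaneously controlling the behaviour at infinity. The identification of the numerator of $\varphi_-$ with $\prod_i(r+b_i)$ is the enabling step, since it both produces \eqref{eq:2.3} and makes the left-hand poles of the master identity cancel.

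For the overshoot \eqref{eq:2.5} I would apply the compensation formula (L\'evy system) to $X$ killed at rate $s$, decomposing the first passage of level $x$ into a continuous crossing and a crossing by a jump. The continuous (creeping) part contributes the $\delta(v)$ term: its rate is the creeping coefficient $A_*^-(s)$ already isolated in \eqref{eq:2.4}, weighted by the supremum density $\tfrac{\partial}{\partial x}\mathsf{P}\{X_{\theta_s}^+<x\}$. For the jump part, conditioning on the last maximum $x-y$ before crossing and applying the compensation formula expresses the contribution as $\int_0^x(\text{jump-over kernel})\,\mathsf{P}\{X_{\theta_s}^+\in x-dy\}$, and the kernel must account for the excursion below that maximum before the fatal jump, which is encoded by the killed infimum law: its atom $p_-(s)$ yields the direct term $p_-(s)\Pi(dv+y)$, and its density $\mathbf q_-(s)e^{\mathbf R_-(s)(y-z)}\mathbf e$ yields the convolution $\int_y^\infty\Pi(dv+z)\,\mathbf q_-(s)e^{\mathbf R_-(s)(y-z)}\mathbf e\,dz$, with the overall factor $s^{-1}$ coming from the exponential killing. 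The main difficulty in this last part is the quintuple-law-type bookkeeping coupling the overshoot to $P_-'(s,\cdot)$; once \eqref{eq:2.2}--\eqref{eq:2.3} are in hand it reduces to assembling the pieces.
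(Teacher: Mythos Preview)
Your derivation of the rational form of $\varphi_-(r)=\mathsf{E}e^{rX_{\theta_s}^-}$ is essentially the Lewis--Mordecki argument that the paper simply cites as formula~(2.6), and your inversion via \eqref{eq:1.2} to get \eqref{eq:2.2}--\eqref{eq:2.3} matches the paper exactly. The substantive divergence is in \eqref{eq:2.4} and \eqref{eq:2.5}. For \eqref{eq:2.4} the paper does not manipulate the factorization directly: it quotes a ready-made identity from \cite[Corollary~2.2]{Husak2011engl},
\[
\mathsf{E}e^{rX_{\theta_s}^+}=\Bigl(1-s^{-1}r\Bigl(A_*^-(s)+\int_0^\infty e^{rx}\!\int_{-\infty}^0\overline\Pi^+(x-y)\,dP_-(s,y)\Bigr)\Bigr)^{-1},
\]
in which $A_*^-(s)$ already appears with its intrinsic probabilistic meaning, and then just evaluates the double integral by inserting the ME density \eqref{eq:2.2}; one Fubini and the resolvent identity give the two displayed terms. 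Your route---split $k=k_-+r\tilde\Pi^+$, then match residues of $(s-k_-)\varphi_-$ against $r\mathbf q_-(s)(r\mathbf I+\mathbf R_-(s))^{-1}\tilde\Pi^+(-\mathbf R_-(s))\mathbf e$ at the $-r_i^-(s)$---is self-contained and avoids the external reference, but it trades one citation for genuine work: you must handle repeated roots via Jordan blocks, justify the growth comparison at infinity, and, separately, check that the constant you extract as the $O(r)$ coefficient really equals the \emph{intrinsically defined} $A_*^-(s)=\tfrac{\sigma^2}{2}P'_-(s,0)$ or $p_-(s)\max\{0,a\}$ (this follows from \eqref{eq:2.2} but is not automatic from your residue argument). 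For \eqref{eq:2.5} the paper likewise takes the short path: it quotes the Gerber--Shiu (quintuple-law) measure from \cite{Kuznetsov2012}, integrates out the pre-jump coordinates $y,z$, and adds the creeping atom from \cite[(2.55)]{Husak2011engl}. Your compensation-formula derivation reproduces exactly those ingredients from first principles. In short: your proposal is correct, but the paper's proof is almost entirely citation-plus-substitution, whereas yours rebuilds the cited machinery inside the proof.
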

\begin{proof} If one of the cases $\left(NS\right)_{-}$ or
$\left(S\right)_{-}$ holds, then according to \cite{Lewis2008} the moment
generating function of
$X_{\theta_{s}}^{-}$ can be defined by the relation
\begin{equation}\label{eq:2.6}
\mathsf{E}e^{rX_{\theta_{s}}^{-}}=
\frac{\prod_{i=1}^{N_{-}}r_{i}^{-}\left(s\right)}
{\prod_{i=1}^{d_{-}}b_{i}}\frac{\prod_{i=1}^{d_{-}}\left(r+b_{i}\right)}
{\prod_{i=1}^{N_{-}}\left(r+r_{i}^{-}\left(s\right)\right)}.
\end{equation}
Using notation given above we can
rewrite this relation as
\begin{equation}\label{eq:2.7}
\mathsf{E}e^{rX_{\theta_{s}}^{-}}=
\frac{\rho_{N_{-}}^{-}\left(s\right)}
{\beta_{d_{-}}^{-}}\frac{\left(\boldsymbol{\beta}_{-},1\right)
\mathbf{h}_{d_{-}}\left(r\right)}{\left(\boldsymbol{\rho}_{-}
\left(s\right),1\right)\mathbf{h}_{N_{-}}\left(r\right)}=
\begin{cases}
\frac{\rho_{d_{-}+1}^{-}\left(s\right)}{\beta_{d_{-}}^{-}}
\frac{\left(\boldsymbol{\beta}_{-},1\right)\mathbf{h}_{d_{-}}
\left(r\right)}{\left(\boldsymbol{\rho}_{-}\left(s\right),1\right)
\mathbf{h}_{d_{-}+1}\left(r\right)} & \left(NS\right)_{-},\\
\frac{\rho_{d_{-}}^{-}\left(s\right)}{\beta_{d_{-}}^{-}}
\left(1+\frac{\left(\boldsymbol{\beta}_{-}-\boldsymbol{\rho}_{-}
\left(s\right)\right)\mathbf{h}_{d_{-}-1}\left(r\right)}
{\left(\boldsymbol{\rho}_{-}\left(s\right),1\right)
\mathbf{h}_{d_{-}}\left(r\right)}\right) & \left(S\right)_{-}.
\end{cases}
\end{equation}
which allows for inversion in $r$, so we get (\ref{eq:2.2}) and
(\ref{eq:2.3}).

Under conditions of the theorem (see \cite[Corollary 2.2]{Husak2011engl}):
\begin{equation}\label{eq:2.8}
\mathsf{E}e^{rX_{\theta_{s}}^{+}}=
\left(1-s^{-1}r\left(A_{*}^{-}\left(s\right)
+\int_{0}^{\infty}e^{rx}\int_{-\infty}^{0}\overline{\Pi}^{+}
\left(x-y\right)dP_{-}\left(s,y\right)\right)\right)^{-1}.
\end{equation}
Using (\ref{eq:2.2}) and (\ref{eq:2.3}) we get
\begin{multline*}
\int_{0}^{\infty}e^{rx}\int_{-\infty}^{0}
\overline{\Pi}^{+}\left(x-y\right)dP_{-}\left(s,y\right)=\\
=p_{-}\left(s\right)\tilde{\Pi}^{+}\left(r\right)+
\mathbf{q}_{-}\left(s\right)\left(r\mathbf{I}+
\mathbf{R}_{-}\left(s\right)\right)^{-1}\left(\tilde{\Pi}^{+}\left(r\right)
-\tilde{\Pi}^{+}\left(-\mathbf{R}_{-}\left(s\right)\right)\right)\mathbf{e}_{-}=\\
=\mathsf{E}e^{rX_{\theta_{s}}^{-}}\tilde{\Pi}^{+}
\left(r\right)-\mathbf{q}_{-}\left(s\right)\left(r\mathbf{I}+
\mathbf{R}_{-}\left(s\right)\right)^{-1}\tilde{\Pi}^{+}
\left(-\mathbf{R}_{-}\left(s\right)\right)\mathbf{e}_{-},
\end{multline*}
Substituting the last relation in (\ref{eq:2.8}) yields (\ref{eq:2.4}).

Using formula (\ref{eq:2.2}), relation (\ref{eq:2.5}) can be deduce
by integration of the Gerber-Shiu measure (see
\cite{Kuznetsov2012}):
\begin{multline*}
\mathsf{E}\left[e^{-s\tau_{x}^{+}},x-X_{\tau_{x}^{+}-0}^{+}
\in dy,x-X_{\tau_{x}^{+}-0}\in dz,X_{\tau_{x}^{+}}-x\in dv,
\tau_{x}^{+}<\infty\right]=\\
=s^{-1}\mathsf{P}\left\{ X_{\theta_{s}}^{+}\in x-dy\right\}
 \mathsf{P}\left\{ -X_{\theta_{s}}^{-}\in dz-v\right\}
 \Pi\left(dv+z\right),\; v,z>0,0\leq y\leq\min\left\{ x,z\right\} ,
\end{multline*}
with respect to $y$ and $z$ and taking into account that $\mathsf{E}\left[e^{-s\tau_{x}^{+}},
x-X_{\tau_{x}^{+}-0}=0,
\tau_{x}^{+}<\infty\right]=s^{-1}A_{*}^{-}\left(s\right)
\frac{\partial}{\partial x}\mathsf{P}\left\{ X_{\theta_{s}}^{+}<x\right\}$
(see \cite[(2.55)]{Husak2011engl}).
\end{proof}
\begin{cor}\label{cor:2.1}
If one of the cases $\left(NS\right)_{+}$ or $\left(S\right)_{+}$ holds,
then
\begin{equation}\label{eq:2.9}
P'_{+}\left(s,x\right)=\frac{\partial}{\partial x}\mathsf{P}
\left\{ X_{\theta_{s}}^{+}<x\right\} =\mathbf{q}_{+}\left(s\right)
e^{\mathbf{R}_{+}\left(s\right)x}\mathbf{e},x>0,
\end{equation}
where
\begin{equation}\label{eq:2.10}
\mathbf{q}_{+}\left(s\right)=\begin{cases}
\frac{\rho_{d_{+}+1}^{+}\left(s\right)}
{\beta_{d_{+}}^{+}}\left(\boldsymbol{\beta}_{+},1\right)
 & \left(NS\right)_{+},\\
\frac{\rho_{d_{+}}^{+}\left(s\right)}{\beta_{d_{+}}^{+}}
\left(\boldsymbol{\beta}_{+}-\boldsymbol{\rho}_{+}\left(s\right)\right)
 & \left(S\right)_{+}.
\end{cases}
\end{equation}
 Moreover, in the case $\left(S\right)_{+}$:
 $p_{+}\left(s\right)=\mathsf{P}\left\{ X_{\theta_{s}}^{+}=0\right\}
  \neq0$
and $p_{+}\left(s\right)=\rho_{d_{+}}^{+}
\left(s\right)/\beta_{d_{+}}^{+}$.

If additionally, $\mathsf{D}X_{1}<\infty,\mathsf{E}X_{\theta_{s}}^{-}<\infty$,
then the moment generating function of $X_{\theta_{s}}^{-}$ admits
the representation
\begin{equation}\label{eq:2.11}
\mathsf{E}e^{rX_{\theta_{s}}^{-}}=\left(1+\frac{r}{s}
\left(A_{*}^{+}\left(s\right)+\mathsf{E}e^{rX_{\theta_{s}}^{+}}
\tilde{\Pi}^{-}\left(r\right)+\mathbf{q}_{-}\left(s\right)
\left(r\mathbf{I}+\mathbf{R}_{+}\left(s\right)\right)^{-1}\tilde{\Pi}^{-}
\left(-\mathbf{R}_{+}\left(s\right)\right)\mathbf{e}\right)\right)^{-1},
\end{equation}
 where
\[
A_{*}^{+}\left(s\right)=\begin{cases}
\frac{\sigma^2}{2}\frac{\rho_{d_{+}+1}^{+}\left(s\right)}{\beta_{d_{+}}^{+}}& \left(NS\right)_{+},\\
a \frac{\rho_{d_{+}}^{+}\left(s\right)}{\beta_{d_{+}}^{+}}
& \left(S\right)_{+}.
\end{cases}
\]
\end{cor}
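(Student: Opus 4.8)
The plan is to deduce Corollary~\ref{cor:2.1} from Theorem~\ref{thm:2.1} by applying the latter to the reflected process $\hat X_t := -X_t$, which interchanges upward and downward quantities. First I would check that $\hat X$ is again of admissible type: its cumulant is $\hat k(r)=k(-r)$, its drift is $\hat a=-a$, its Gaussian coefficient is still $\sigma$, and its L\'evy measure is the reflection $\hat\Pi(dx)=\Pi(-dx)$. Hence the positive ME jumps of $X$ become the negative ME jumps of $\hat X$, so that case $(NS)_+$ (resp. $(S)_+$) for $X$ is precisely case $(NS)_-$ (resp. $(S)_-$) for $\hat X$, with $\hat d_-=d_+$ and $\hat N_-=N_+$; thus Theorem~\ref{thm:2.1} applies to $\hat X$.

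Next I would set up the dictionary between the two processes. Since $\hat k(r)=k(-r)$, the roots of $\hat k(r)=s$ in the half-planes $\pm\Re[r]>0$ are $\{r_i^{\mp}(s)\}$, so $\hat r_i^{\pm}(s)=r_i^{\mp}(s)$ and consequently $\hat\rho_k^-(s)=\rho_k^+(s)$, $\hat\beta_k^-=\beta_k^+$; in vector form $\hat{\boldsymbol\rho}_-(s)=\boldsymbol\rho_+(s)$, $\hat{\boldsymbol\beta}_-=\boldsymbol\beta_+$. Comparing the definitions of $\mathbf R_\pm(s)$ recalled above then gives $\hat{\mathbf R}_-(s)=-\mathbf R_+(s)$. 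The extrema reflect as $\hat X^-_{\theta_s}=-X^+_{\theta_s}$ and $\hat X^+_{\theta_s}=-X^-_{\theta_s}$, and a short change of variables in the tail integrals yields $\hat{\overline\Pi}^+(x)=\overline\Pi^-(-x)$ and hence $\hat{\tilde\Pi}^+(r)=\tilde\Pi^-(-r)$.

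With this dictionary, (\ref{eq:2.9})--(\ref{eq:2.10}) and the mass $p_+(s)$ follow at once. Applying (\ref{eq:2.2})--(\ref{eq:2.3}) to $\hat X$ and using the density transform $P'_+(s,x)=\hat P'_-(s,-x)$ together with $e^{\hat{\mathbf R}_-(s)(-x)}=e^{\mathbf R_+(s)x}$ gives $P'_+(s,x)=\hat{\mathbf q}_-(s)e^{\mathbf R_+(s)x}\mathbf e$; substituting $\hat\rho_k^-=\rho_k^+$, $\hat\beta_k^-=\beta_k^+$ into (\ref{eq:2.3}) identifies $\hat{\mathbf q}_-(s)$ with the $\mathbf q_+(s)$ of (\ref{eq:2.10}), and the same substitution in the expression for $p_-(s)$ produces $p_+(s)=\rho_{d_+}^+(s)/\beta_{d_+}^+$.

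For (\ref{eq:2.11}) I would apply (\ref{eq:2.4}) to $\hat X$ and use $\mathsf E e^{rX^-_{\theta_s}}=\mathsf E e^{-r\hat X^+_{\theta_s}}$, i.e. evaluate the dual identity at $-r$. The relations $\hat{\tilde\Pi}^+(-r)=\tilde\Pi^-(r)$, $\hat{\tilde\Pi}^+(-\hat{\mathbf R}_-(s))=\tilde\Pi^-(-\mathbf R_+(s))$, and $(-r\mathbf I+\hat{\mathbf R}_-(s))^{-1}=-(r\mathbf I+\mathbf R_+(s))^{-1}$ then turn the outer factor $1-\tfrac{r}{s}$ into $1+\tfrac{r}{s}$ and convert the subtracted $\mathbf q$-term of (\ref{eq:2.4}) into an added one with coefficient $\mathbf q_+(s)=\hat{\mathbf q}_-(s)$, while $\hat A_*^-(s)$ coincides with $A_*^+(s)$ (the drift now entering through $\hat a=-a$, which renders the creeping coefficient nonnegative); this is exactly (\ref{eq:2.11}). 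The only real obstacle is bookkeeping: three independent sign reversals --- the reflected drift in $A_*^{\pm}$, the relation $\hat{\mathbf R}_-(s)=-\mathbf R_+(s)$, and the substitution $r\mapsto-r$ --- must be kept mutually consistent so that their net effect reproduces precisely the sign pattern of (\ref{eq:2.11}).
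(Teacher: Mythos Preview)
Your approach is exactly the paper's: apply Theorem~\ref{thm:2.1} to the dual process $Y_t=-X_t$ (your $\hat X_t$), so that $(NS)_+/(S)_+$ for $X$ becomes $(NS)_-/(S)_-$ for $Y$, and then read off (\ref{eq:2.9})--(\ref{eq:2.11}) from (\ref{eq:2.2})--(\ref{eq:2.4}) via the identity $\mathsf E e^{rY_{\theta_s}^{\pm}}=\mathsf E e^{-rX_{\theta_s}^{\mp}}$. Your dictionary and sign bookkeeping are correct; note that your derivation actually yields the coefficient $\mathbf q_+(s)=\hat{\mathbf q}_-(s)$ in (\ref{eq:2.11}), so the printed $\mathbf q_-(s)$ there is a typographical slip.
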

\begin{proof} To prove relations (\ref{eq:2.9}) -- (\ref{eq:2.11}) we
can use (\ref{eq:2.2}) -- (\ref{eq:2.4}) and the fact that if for the
dual process $Y_{t}=-X_{t}$ one of the cases $\left(NS\right)_{-}$
or $\left(S\right)_{-}$ holds, then for $X_{t}$ we have the case $\left(NS\right)_{+}$
or $\left(S\right)_{+}$ correspondingly, and $\mathsf{E}e^{rY_{\theta_{s}}^{+}}
=\mathsf{E}e^{-rX_{\theta_{s}}^{-}}$.
\end{proof}

If we have cases $\left(NS\right)_{-}$ and $\left(NS\right)_{+}$ ($\left(S\right)_{-}$
 and
$\left(S\right)_{+}$) at the same time, then the L\'evy process $X_{t}$ has
the gaussian part with possibly drift (with
zero drift and without gaussian part, correspondingly) and the jump part is a compound Poisson process
with bilateral matrix-exponential distributed jumps.
\begin{cor}\label{cor:2.2}
If we have the cases $\left(NS\right)_{-}$
and $\left(NS\right)_{+}$ at the same time, then
\begin{equation}\label{eq:2.12}
\frac{\partial}{\partial x}\mathsf{P}
\left\{ X_{\theta_{s}}^{\pm}<x\right\} =\mathbf{q}_{\pm}\left(s\right)
e^{\mathbf{R}_{\pm}\left(s\right)x}\mathbf{e},\pm x>0,
\end{equation}
where $\mathbf{q}_{\pm}\left(s\right)=
\frac{\rho_{d_{\pm+1}}^{\pm}\left(s\right)}{\beta_{d\pm}^{\pm}}
\left(\boldsymbol{\beta}_{\pm},1\right)$.

If we have $\left(S\right)_{-}$ and $\left(S\right)_{+}$ simultaneously,
 then $\mathsf{P}\left\{ X_{\theta_{s}}^{\pm}=0\right\}
 =\rho_{d_{\pm}}^{\pm}\left(s\right)/\beta_{d_{\pm}}^{\pm}$
and
\begin{equation}\label{eq:2.13}
\frac{\partial}{\partial x}
\mathsf{P}\left\{ X_{\theta_{s}}^{\pm}<x\right\} =
\mathbf{q}_{\pm}\left(s\right)e^{\mathbf{R}_{\pm}\left(s\right)x}\mathbf{e},
\pm x>0,
\end{equation}
where $\mathbf{q}_{\pm}\left(s\right)=
\frac{\rho_{d_{\pm}}^{\pm}\left(s\right)}{\beta_{d_{\pm}}^{\pm}}
\left(\boldsymbol{\beta}_{\pm}-\boldsymbol{\rho}_{\pm}\left(s\right)\right)$.
\end{cor}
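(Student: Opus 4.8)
The plan is to recognize that Corollary~\ref{cor:2.2} is an immediate consequence of Theorem~\ref{thm:2.1} and Corollary~\ref{cor:2.1} applied together, with no new analysis required. The crucial preliminary observation, already flagged in the paragraph preceding the statement, is that the one-sided hypotheses are genuinely compatible in exactly two regimes: $\left(NS\right)_{-}$ and $\left(NS\right)_{+}$ can hold simultaneously only when $\sigma>0$ (since $\sigma=0$ would force $a$ to be both positive and negative), while $\left(S\right)_{-}$ and $\left(S\right)_{+}$ can hold together only when $\sigma=0$ and $a=0$. In either regime the jump part is a compound Poisson process whose jumps are matrix-exponential in both directions, so both the infimum result of Theorem~\ref{thm:2.1} and the supremum result of Corollary~\ref{cor:2.1} are simultaneously available.

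First I would treat the infimum. Since $\left(NS\right)_{-}$ holds, Theorem~\ref{thm:2.1} applies directly: equation~(\ref{eq:2.2}) gives $P'_{-}\left(s,x\right)=\mathbf{q}_{-}\left(s\right)e^{\mathbf{R}_{-}\left(s\right)x}\mathbf{e}$ for $x<0$, with $\mathbf{q}_{-}\left(s\right)=\frac{\rho_{d_{-}+1}^{-}\left(s\right)}{\beta_{d_{-}}^{-}}\left(\boldsymbol{\beta}_{-},1\right)$ read off from the $\left(NS\right)_{-}$ branch of~(\ref{eq:2.3}). This is precisely the lower-sign instance of~(\ref{eq:2.12}). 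Next I would treat the supremum by duality: since $\left(NS\right)_{+}$ holds, Corollary~\ref{cor:2.1} applies, and equation~(\ref{eq:2.9}) together with the $\left(NS\right)_{+}$ branch of~(\ref{eq:2.10}) yields $P'_{+}\left(s,x\right)=\mathbf{q}_{+}\left(s\right)e^{\mathbf{R}_{+}\left(s\right)x}\mathbf{e}$ for $x>0$ with $\mathbf{q}_{+}\left(s\right)=\frac{\rho_{d_{+}+1}^{+}\left(s\right)}{\beta_{d_{+}}^{+}}\left(\boldsymbol{\beta}_{+},1\right)$, the upper-sign instance of~(\ref{eq:2.12}). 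Combining the two displays gives~(\ref{eq:2.12}).

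For the simultaneous $\left(S\right)$ case I would repeat the argument verbatim, now invoking the $\left(S\right)_{-}$ branch of~(\ref{eq:2.3}) and the $\left(S\right)_{+}$ branch of~(\ref{eq:2.10}): these supply the continuous densities~(\ref{eq:2.13}) with $\mathbf{q}_{\pm}\left(s\right)=\frac{\rho_{d_{\pm}}^{\pm}\left(s\right)}{\beta_{d_{\pm}}^{\pm}}\left(\boldsymbol{\beta}_{\pm}-\boldsymbol{\rho}_{\pm}\left(s\right)\right)$, while the atom identities $p_{\pm}\left(s\right)=\rho_{d_{\pm}}^{\pm}\left(s\right)/\beta_{d_{\pm}}^{\pm}$ follow from the corresponding atom statements of Theorem~\ref{thm:2.1} (for the infimum) and Corollary~\ref{cor:2.1} (for the supremum).

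Since the result is a straightforward specialization of two earlier theorems, there is no genuine analytical obstacle. The only points requiring care are the consistent bookkeeping of the $\pm$ signs across~(\ref{eq:2.3}),~(\ref{eq:2.10}) and the verification that both one-sided hypotheses are compatible, which is exactly the $\sigma>0$ (respectively $\sigma=0$, $a=0$) dichotomy identified at the outset.
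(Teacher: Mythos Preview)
Your proposal is correct and matches the paper's treatment: Corollary~\ref{cor:2.2} is stated there without proof, precisely because it is the direct juxtaposition of the $\left(NS\right)_{-}$/$\left(S\right)_{-}$ branch of Theorem~\ref{thm:2.1} with the $\left(NS\right)_{+}$/$\left(S\right)_{+}$ branch of Corollary~\ref{cor:2.1}, under the compatibility conditions on $\sigma$ and $a$ that you identify. No additional argument is needed beyond what you have written.
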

Note that, if $\sigma=0,a\geq0,$ and $\Pi\left(dx\right)=\lambda_{-}b_{1}e^{b_{1}x}dx$ for
$x<0$, then the process $X_{t}$ is called almost lower semi-continuous
(for details see \cite{Husak2011engl}) and
we have the case $\left(S\right)_{-}$ with $d_{-}=1$, hence $N_{-}=1$
and the cumulant equation has a unique negative real root
$-r_{1}^{-}\left(s\right)>-b_{1}$.
Hence, by (\ref{eq:2.2}) the density of infimum is $P'_{-}\left(s,x\right)
=\frac{r_{1}^{-}\left(s\right)}{b_{1}}
\left(b_{1}-r_{1}^{-}\left(s\right)\right)e^{r_{1}^{-}\left(s\right)x},x>0$
(cf. \cite[(3.110)]{Husak2011engl}).

To find the distribution of absolute supremum $X^{+}=\sup_{0\leq t<\infty}X_{t}$ or infimum $X^{-}=
\inf_{0\leq t<\infty}X_{t}$ we should take into consideration the sign of $\mathsf{E} X_{1}$.
If $\mu=\mathsf{E}X_{1}<0$ ($\mu>0$), then the distribution of $X^+$ ($X^-$)
is non degenerate and it is defined in terms of the roots of the cumulant
equation for $s=0$ (see, for instance, \cite{Husak2011engl}).

If we have one of the cases $\left(NS\right)_{\pm}$ or $\left(S\right)_{\pm}$,
then from \cite{Bratiychuk1990} it can be seen that for $\pm\mu<0$:
$r_{i}^{\pm}\left(s\right)\underset{s\rightarrow0}{\longrightarrow}r_{i}^{\pm},
\Re\left[r_{i}^{\pm}\right]>0$,
$i={1,\ldots,N_{\pm}}$, and for $\mp\mu<0$:
$r_{i}^{\pm}\left(s\right)\underset{s\rightarrow0}{\longrightarrow}r_{i}^{\pm},
\Re\left[r_{i}^{\pm}\right]>0$,
$i={2,\ldots,N_{\pm}}$, $r_{1}^{\pm}\left(s\right)
\underset{s\rightarrow0}{\longrightarrow}0$,
$s^{-1}r_{1}^{\pm}\left(s\right)\underset{s\rightarrow0}{\longrightarrow}|\mu|$.
Thus we obtain the next corollary of Theorem \ref{thm:2.1}.
\begin{cor}\label{cor:2.3}
Let one of the cases $\left(NS\right)_{-}$ or $\left(S\right)_{-}$ hold
and $\mu=\mathsf{E}X_{1}<0$, then
\begin{equation}\label{eq:2.14}
\lim_{s\rightarrow0}s^{-1}P'_{-}\left(s,x\right)
=\mathbf{q}'_{-}e^{\mathbf{R}_{-}\left(0\right)x}\mathbf{e},x<0,
\end{equation}
\begin{equation}\label{eq:2.15}
\mathbf{q}'_{-}=\begin{cases}
{\displaystyle \frac{\prod_{i=2}^{d_{-}+1}r_{i}^{-}}{|\mu|\beta_{d_{-}}^{-}}
\left(\boldsymbol{\beta}_{-},1\right)} & \left(NS\right)_{-},\\
{\displaystyle \frac{\prod_{i=2}^{d_{-}}r_{i}^{-}}{|\mu|
\beta_{d_{-}}^{-}}\left(\boldsymbol{\beta}_{-}-\boldsymbol{\rho}_{-}
\left(0\right)\right)} & \left(S\right)_{-}.
\end{cases}
\end{equation}
 Moreover, in case $\left(S\right)_{-}$:
  $p'_{-}=\lim_{s\rightarrow0}s^{-1}p_{-}\left(s\right)
 =\left(\prod_{i=2}^{d_{-}}r_{i}^{-}\right)/\left(|\mu|
 \prod_{i=1}^{d_{-}}b_{i}\right)$.

If additionally $\mathsf{D}X_{1}<\infty$,
then the moment generating function of $X^{+}$
has the form
\begin{multline}\label{eq:2.16}
\mathsf{E}e^{rX^{+}}=\\=\left(1-r\left(A'_{-}+p'_{-}\tilde{\Pi}^{+}\left(r\right)
+\mathbf{q}'_{-}\left(r\mathbf{I}+\mathbf{R}_{-}\left(0\right)
\right)^{-1}\left(\tilde{\Pi}^{+}\left(r\right)
\mathbf{I}-\tilde{\Pi}^{+}\left(-\mathbf{R}_{-}\left(0\right)
\right)\right)\mathbf{e}\right)\right)^{-1},
\end{multline}
 \begin{equation*}
A'_{-}=\lim_{s\rightarrow0}s^{-1}A_{*}^{-}\left(s\right)=\begin{cases}
{\frac{\sigma^2}{2}\frac{\prod_{i=2}^{d_{-}+1}r_{i}^{-}}{|\mu|\beta_{d_{-}}^{-}}}
 & \left(NS\right)_{-},\\
{ a\frac{\prod_{i=2}^{d_{-}}r_{i}^{-}}{|\mu|
\beta_{d_{-}}^{-}}} & \left(S\right)_{-}.
\end{cases}
\end{equation*}

The distribution of the overjump is defined by the relation
\begin{multline}\label{eq:2.17}
\mathsf{P}\left\{ X_{\tau_{x}^{+}}-x\in dv\right\} =
A'_{-}\frac{\partial}{\partial x}\mathsf{P}\left\{ X^{+}<x\right\}
 \delta\left(v\right)dv+\\
\int_{0}^{x}\left(p'_{-}\Pi\left(dv+y\right)+\int_{y}^{\infty}
\Pi\left(dv+z\right)\mathbf{q}'_{-}e^{\mathbf{R}_{-}\left(0\right)
\left(y-z\right)}\mathbf{e}dz\right)\mathsf{P}\left\{ X^{+}\in x-dy\right\}.
\end{multline}
\end{cor}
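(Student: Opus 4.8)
The plan is to derive every assertion as the limit $s\downarrow 0$ of the corresponding statement of Theorem~\ref{thm:2.1}, the only analytic input being the behaviour of the roots discussed above. Since $\mu<0$, expanding the cumulant near the origin as $k(r)=\mu r+o(r)$ and solving $k(-r_1^-(s))=s$ gives $r_1^-(s)\sim s/|\mu|$, so $s^{-1}r_1^-(s)\to|\mu|^{-1}$, while $r_i^-(s)\to r_i^-$ with $\Re[r_i^-]>0$ for $i=2,\dots,N_-$. Probabilistically $\theta_s\uparrow\infty$, so $X_{\theta_s}^+\uparrow X^+<\infty$ a.s.\ (negative drift) whereas $X_{\theta_s}^-\downarrow X^-=-\infty$; this dichotomy is why the supremum functionals converge outright while the infimum functionals must first be rescaled by $s^{-1}$.

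First I would establish (\ref{eq:2.14})--(\ref{eq:2.15}). As $s\to0$ the elementary symmetric functions converge, $\boldsymbol{\rho}_-(s)\to\boldsymbol{\rho}_-(0)$ (now with $r_1^-=0$), hence $\mathbf{R}_-(s)\to\mathbf{R}_-(0)$ and $e^{\mathbf{R}_-(s)x}\to e^{\mathbf{R}_-(0)x}$ pointwise in $x$. The prefactor $\mathbf{q}_-(s)$ in (\ref{eq:2.3}) contains precisely one vanishing factor $r_1^-(s)$, inside $\rho_{d_-+1}^-(s)=\prod_{i=1}^{N_-}r_i^-(s)$ in case $(NS)_-$ and inside $\rho_{d_-}^-(s)$ in case $(S)_-$; thus $\mathbf{q}_-(s)=O(s)$ and $s^{-1}\mathbf{q}_-(s)\to\mathbf{q}'_-$ with the stated value, because $s^{-1}\rho_{d_-+1}^-(s)\to|\mu|^{-1}\prod_{i=2}^{N_-}r_i^-$. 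The atom $p_-(s)=\prod_i r_i^-(s)/b_i$ and the constant $A_*^-(s)$ are $O(s)$ for the same reason, giving $p'_-$ and $A'_-$ after dividing by $s$.

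For (\ref{eq:2.16}) I would write (\ref{eq:2.4}) as $\mathsf{E}e^{rX_{\theta_s}^+}=(1-\tfrac{r}{s}G(s,r))^{-1}$ with $G(s,r)=A_*^-(s)+\mathsf{E}e^{rX_{\theta_s}^-}\tilde\Pi^+(r)-\mathbf{q}_-(s)(r\mathbf{I}+\mathbf{R}_-(s))^{-1}\tilde\Pi^+(-\mathbf{R}_-(s))\mathbf{e}$, and work on $\Re[r]=0$, $r\ne0$. Integrating (\ref{eq:2.2}) yields $\mathsf{E}e^{rX_{\theta_s}^-}=p_-(s)+\mathbf{q}_-(s)(r\mathbf{I}+\mathbf{R}_-(s))^{-1}\mathbf{e}$, so each summand of $G$ is $O(s)$ there and $s^{-1}\mathsf{E}e^{rX_{\theta_s}^-}\to p'_-+\mathbf{q}'_-(r\mathbf{I}+\mathbf{R}_-(0))^{-1}\mathbf{e}$. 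Combining the three normalized limits and grouping the two contributions carrying $\mathbf{q}'_-(r\mathbf{I}+\mathbf{R}_-(0))^{-1}$ reproduces exactly the bracket of (\ref{eq:2.16}); since $X_{\theta_s}^+\uparrow X^+$ with $\mathsf{E}X^+<\infty$ (ensured by $\mathsf{D}X_1<\infty$), dominated convergence of the characteristic functions identifies the left-hand side, and the formula extends to the relevant $r$-region by analyticity, the point $r=0$ being handled by the trivial value $\mathsf{E}e^{0\cdot X^+}=1$.

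Finally, (\ref{eq:2.17}) is the $s\to0$ limit of (\ref{eq:2.5}): on $\{\tau_x^+<\infty\}$ we have $e^{-s\tau_x^+}\to1$, so the left side tends to $\mathsf{P}\{X_{\tau_x^+}-x\in dv\}$, while on the right the factor $s^{-1}$ combines with $A_*^-(s),p_-(s),\mathbf{q}_-(s)=O(s)$ to produce $A'_-,p'_-,\mathbf{q}'_-$, the remaining factors $\frac{\partial}{\partial x}\mathsf{P}\{X_{\theta_s}^+<x\}$, $\mathsf{P}\{X_{\theta_s}^+\in x-dy\}$ and $e^{\mathbf{R}_-(s)(y-z)}$ converging to their $s=0$ analogues. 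I expect the main obstacle to be the justification of these interchanges of limit and integration: one must produce dominating functions for the $dz$- and $dy$-integrals against the fixed measures $\Pi$ and $\overline\Pi^+$, verify the weak convergence $X_{\theta_s}^+\Rightarrow X^+$ together with convergence of the densities $\tfrac{\partial}{\partial x}\mathsf{P}\{X_{\theta_s}^+<x\}$, and check that the spectral degeneration of $\mathbf{R}_-(0)$ (the new zero eigenvalue, along which $s^{-1}P'_-$ is no longer integrable, matching $X^-=-\infty$) still leaves $(r\mathbf{I}+\mathbf{R}_-(0))^{-1}$ and $\tilde\Pi^+(-\mathbf{R}_-(0))$ well defined for $\Re[r]>0$.
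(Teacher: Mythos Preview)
Your approach is exactly the paper's: the corollary is stated without a separate proof and is derived from Theorem~\ref{thm:2.1} by letting $s\to 0$ and using the root asymptotics recorded just before the statement. In fact you supply more justification than the paper does (dominated convergence, weak convergence of $X_{\theta_s}^+$, well-definedness of $(r\mathbf{I}+\mathbf{R}_-(0))^{-1}$), and your computation $s^{-1}r_1^-(s)\to|\mu|^{-1}$ is the one actually consistent with the formulas (\ref{eq:2.15}), despite the paper's displayed limit $|\mu|$ in the preceding paragraph, which is evidently a misprint.
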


\section{Occupation time and ladder process}

Denote the moment generating function for the time that the process $X_{t}$
spends in the interval $\left(x,+\infty\right)$ until $\theta_{s}$ by
\[
D_{x}\left(s,u\right)=\mathsf{E}e^{-u\int_{0}^{\theta_{s}}I\left\{ X_{t}>x\right\} dt}.
\]
Combining the results of the previous section with the relations for $D_{x}\left(s,u\right)$
given in~\cite{Husak2011engl} yields the following statement.
\begin{thm}\label{thm:3.1}
If one of the cases $\left(NS\right)_{-}$
 or $\left(S\right)_{-}$
$\left(a>0\right)$ holds, then
\[
D_{0}(s,u)=\frac{s}{s+u}\frac{\rho_{N_{-}}^{-}\left(s+u\right)}
{\rho_{N_{-}}^{-}\left(s\right)},
\]
\begin{multline}\label{eq:3.1}
D_{x}\left(s,u\right)=
\frac{s}{s+u}\times\\\times\left(1-\frac{\rho_{N_{-}}^{-}\left(s+u\right)}
{\rho_{N_{-}}^{-}\left(s\right)}\left(\boldsymbol{\rho}_{-}\left(s\right)
-\boldsymbol{\rho}_{-}\left(s+u\right)\right)\mathbf{R}_{-}^{-1}
\left(s+u\right)e^{\mathbf{R}_{-}\left(s+u\right)x}\mathbf{e}\right),
x<0.
\end{multline}

If one of the cases $\left(NS\right)_{+}$ or $\left(S\right)_{+}$
$\left(a<0\right)$ holds, then
\[
D_{0}(s,u)=\frac{\rho_{N_{+}}^{+}\left(s\right)}{\rho_{N_{+}}^{+}\left(s+u\right)},
\]
\begin{equation}\label{eq:3.2}
D_{x}\left(s,u\right)=1+\frac{\rho_{N_{+}}^{+}\left(s\right)}
{\rho_{N_{+}}^{+}\left(s+u\right)}\left(\boldsymbol{\rho}_{+}
\left(s+u\right)-\boldsymbol{\rho}_{+}\left(s\right)\right)
\mathbf{R}_{+}^{-1}\left(s\right)e^{\mathbf{R}_{+}\left(s\right)x}
\mathbf{e},x>0.
\end{equation}
\end{thm}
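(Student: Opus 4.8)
The plan is to treat the two cases asymmetrically: establish the statements under $\left(NS\right)_-$ or $\left(S\right)_-$ directly, from the master relation for $D_x(s,u)$ recorded in \cite{Husak2011engl} combined with the explicit matrix-exponential form of the killed infimum supplied by Theorem~\ref{thm:2.1}, and then obtain the statements under $\left(NS\right)_+$ or $\left(S\right)_+$ by the time-reversal $Y_t=-X_t$, exactly as Corollary~\ref{cor:2.1} was deduced from Theorem~\ref{thm:2.1}.

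First I would recall the relation from \cite{Husak2011engl} expressing $D_x(s,u)$, for $x\le 0$ in the case of matrix-exponential downward jumps, as $s/(s+u)$ times a correction term built as a convolution of the \emph{doubly killed} infimum density $P'_-(s+u,\cdot)$ against the appropriate $s$-resolvent kernel. The role of Section~2 is that under $\left(NS\right)_-$ or $\left(S\right)_-$ this density is $P'_-(s+u,y)=\mathbf q_-(s+u)e^{\mathbf R_-(s+u)y}\mathbf e$ by (\ref{eq:2.2})--(\ref{eq:2.3}), with every eigenvalue of $\mathbf R_-(s+u)$ equal to some $-r_i^-(s+u)$ and hence lying in the open left half-plane. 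Consequently each integral occurring in the correction is an elementary matrix-exponential integral; in particular $\int_{-\infty}^{x}e^{\mathbf R_-(s+u)y}\,dy=\mathbf R_-^{-1}(s+u)e^{\mathbf R_-(s+u)x}$, which is the source of the factor $\mathbf R_-^{-1}(s+u)e^{\mathbf R_-(s+u)x}\mathbf e$ in (\ref{eq:3.1}).

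The computational heart of the argument, and the step I expect to be the main obstacle, is to show that after these integrations the coefficient collapses to $\tfrac{\rho_{N_-}^-(s+u)}{\rho_{N_-}^-(s)}\bigl(\boldsymbol\rho_-(s)-\boldsymbol\rho_-(s+u)\bigr)$. Here I would use that the $\rho_k^-(\cdot)$ are the elementary symmetric functions of the roots $r_i^-(\cdot)$ and that $\mathbf R_-$ is in companion form, so that $\mathbf R_-^{-1}(s+u)\mathbf e$, $\mathbf q_-(s+u)$ and $\prod_i r_i^-(\cdot)=\rho_{N_-}^-(\cdot)$ can all be written explicitly through the $\rho_k^-$; the difference $\boldsymbol\rho_-(s)-\boldsymbol\rho_-(s+u)$ appears precisely because the two killing rates $s$ and $s+u$ enter the correction through root data at the two levels. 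Two checks then fix the remaining constants and guard against algebraic slips: letting $x\to-\infty$ must return $\mathsf Ee^{-u\theta_s}=s/(s+u)$ (the process is then always above the level, so the occupation functional equals $\theta_s$), and setting $x=0$ must reproduce $D_0(s,u)=\tfrac{s}{s+u}\tfrac{\rho_{N_-}^-(s+u)}{\rho_{N_-}^-(s)}$; the latter amounts to the identity $1-\tfrac{\rho_{N_-}^-(s+u)}{\rho_{N_-}^-(s)}\bigl(\boldsymbol\rho_-(s)-\boldsymbol\rho_-(s+u)\bigr)\mathbf R_-^{-1}(s+u)\mathbf e=\tfrac{\rho_{N_-}^-(s+u)}{\rho_{N_-}^-(s)}$, which I would verify from the explicit action of $\mathbf R_-^{-1}(s+u)$ on $\mathbf e$.

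For $\left(NS\right)_+$ or $\left(S\right)_+$ I would argue by duality. Writing $Y_t=-X_t$ one has $\{X_t>x\}=\{Y_t<-x\}$, so the time $X$ spends above $x$ equals $\theta_s$ minus the time $Y$ spends above $-x$; combined with the reweighting identity $\mathsf E\bigl[e^{-u\theta_s}g(\theta_s)\bigr]=\tfrac{s}{s+u}\mathsf E\bigl[g(\theta_{s+u})\bigr]$ this yields $D_x(s,u)[X]=\tfrac{s}{s+u}\,D_{-x}(s+u,-u)[Y]$. Since $X\in\left(NS\right)_+/\left(S\right)_+$ forces $Y\in\left(NS\right)_-/\left(S\right)_-$, I may substitute the first-case formula for $Y$ with $s$ replaced by $s+u$ and $u$ by $-u$; the prefactors cancel, the root-product ratio inverts, and translating the $Y$-infimum data to the $X$-supremum data via $\mathbf R_-^{Y}(s)=-\mathbf R_+(s)$, $\boldsymbol\rho_-^{Y}=\boldsymbol\rho_+$ and $r_i^{Y-}=r_i^+$ turns the leading $s/(s+u)$ into $1$ and the minus sign into a plus, giving (\ref{eq:3.2}) and $D_0(s,u)=\rho_{N_+}^+(s)/\rho_{N_+}^+(s+u)$. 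The sign conditions $a>0$ (resp.\ $a<0$) are what ensure in the subordinator-type case $\left(S\right)_\pm$ that the level is actually crossed, so that the master relation applies.
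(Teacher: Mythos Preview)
Your plan diverges from the paper's in two places, and one of them is a genuine gap.

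\textbf{First case.} The relation the paper actually uses from \cite{Husak2011engl} is not a direct convolution formula for $D_x(s,u)$ but the Laplace-transform identity
\[
\int_{-\infty}^{+0}e^{rx}D'_x(s,u)\,dx-D_0(s,u)=-\frac{s}{s+u}\,\frac{\mathsf{E}e^{rX_{\theta_{s+u}}^{-}}}{\mathsf{E}e^{rX_{\theta_{s}}^{-}}}
\]
(and its companion for the supremum). The right-hand side is a \emph{ratio} of moment generating functions, and under $(NS)_-$ or $(S)_-$ each is the rational function (\ref{eq:2.7}); the numerators $\left(\boldsymbol\beta_-,1\right)\mathbf{h}_{d_-}(r)$ cancel, leaving
\[
\frac{\rho_{N_-}^-(s+u)}{\rho_{N_-}^-(s)}\,\frac{\left(\boldsymbol\rho_-(s),1\right)\mathbf{h}_{N_-}(r)}{\left(\boldsymbol\rho_-(s+u),1\right)\mathbf{h}_{N_-}(r)}
=\frac{\rho_{N_-}^-(s+u)}{\rho_{N_-}^-(s)}\Bigl(1-\frac{\left(\boldsymbol\rho_-(s+u)-\boldsymbol\rho_-(s)\right)\mathbf{h}_{N_--1}(r)}{\left(\boldsymbol\rho_-(s+u),1\right)\mathbf{h}_{N_-}(r)}\Bigr),
\]
which by (\ref{eq:1.2}) inverts to $D'_x(s,u)$ immediately, and then $\lim_{x\to-\infty}D_x=s/(s+u)$ gives (\ref{eq:3.1}). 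This is a two-line rational-function manipulation; the difference $\boldsymbol\rho_-(s)-\boldsymbol\rho_-(s+u)$ drops out of the subtraction of the two monic polynomials, with no need to handle $\mathbf q_-$ or the companion-matrix action on $\mathbf e$ at all. Your ``master relation'' as a convolution of $P'_-(s+u,\cdot)$ against an ``$s$-resolvent kernel'' is not what \cite{Husak2011engl} provides here, and the route you sketch (substitute $\mathbf q_-(s+u)$, integrate, then hope the coefficient collapses) manufactures exactly the obstacle you flag as ``the main obstacle''. That obstacle disappears once you start from the correct identity.

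\textbf{Second case.} Here the paper does \emph{not} argue by duality: it uses the companion identity $\int_{-0}^{+\infty}e^{rx}D'_x(s,u)\,dx+D_0(s,u)=\mathsf{E}e^{rX_{\theta_s}^{+}}\big/\mathsf{E}e^{rX_{\theta_{s+u}}^{+}}$ directly, performs the same rational-function cancellation with (\ref{eq:2.7}) for the supremum, inverts, and fixes the constant via $\lim_{x\to+\infty}D_x=1$. Your duality computation $D_x(s,u)[X]=\tfrac{s}{s+u}D_{-x}(s+u,-u)[Y]$ together with $\mathbf R_-^Y=-\mathbf R_+$ is correct algebra and does recover (\ref{eq:3.2}), but it requires substituting a negative value $-u$ into (\ref{eq:3.1}); you would need to say a word about why the formula, derived for $u>0$, extends (the rational dependence on the root data and the condition $s>0$ suffice). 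The paper's symmetric use of the two Laplace identities avoids this detour entirely.
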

\begin{proof} Following \cite[Theorem 2.6]{Husak2011engl},
for non step-wise processes the next relations are true
\begin{gather}
\int_{-0}^{+\infty}e^{rx}D'_{x}\left(s,u\right)dx+D_{0}(s,u)=
\frac{\mathsf{E}e^{rX_{\theta_{s}}^{+}}}
{\mathsf{E}e^{rX_{\theta_{s+u}}^{+}}},\nonumber \\
\int_{-\infty}^{+0}e^{rx}D'_{x}\left(s,u\right)dx-D_{0}(s,u)
=-\frac{s}{s+u}\frac{\mathsf{E}e^{rX_{\theta_{s+u}}^{-}}}
{\mathsf{E}e^{rX_{\theta_{s}}^{-}}}.\label{eq:3.3}
\end{gather}
If one of the cases $\left(NS\right)_{-}$ or
$\left(S\right)_{-}$
$\left(a>0\right)$ holds, then recalling Theorem \ref{thm:2.1}
it follows
that
\begin{multline*}
\frac{\mathsf{E}e^{rX_{\theta_{s+u}}^{-}}}
{\mathsf{E}e^{rX_{\theta_{s}}^{-}}}
=\frac{\rho_{N_{-}}^{-}\left(s+u\right)}{\rho_{N_{-}}^{-}
\left(s\right)}\frac{\left(\boldsymbol{\rho}_{-}\left(s\right),1\right)
\mathbf{h}_{N_{-}}\left(r\right)}{\left(\boldsymbol{\rho}_{-}
\left(s+u\right),1\right)\mathbf{h}_{N_{-}}\left(r\right)}=\\=
\frac{\rho_{N_{-}}^{-}\left(s+u\right)}{\rho_{N_{-}}^{-}\left(s\right)}
\left(1-\frac{\left(\boldsymbol{\rho}_{-}\left(s+u\right)-
\boldsymbol{\rho}_{-}\left(s\right)\right)\mathbf{h}_{N_{-}-1}
\left(r\right)}{\left(\boldsymbol{\rho}_{-}\left(s+u\right),
1\right)\mathbf{h}_{N_{-}}\left(r\right)}\right).
\end{multline*}
Taking account of formula (\ref{eq:3.3})
this gives us the following
\[
D'_{x}\left(s,u\right)=\frac{s}{s+u}
\frac{\rho_{N_{-}}^{-}\left(s+u\right)}{\rho_{N_{-}}^{-}\left(s\right)}
\left(\boldsymbol{\rho}_{-}\left(s+u\right)-\boldsymbol{\rho}_{-}\left(s\right)
\right)
e^{\mathbf{R}_{-}\left(s+u\right)x}\mathbf{e},\;
x<0,
\]
and combining with $\lim_{x\rightarrow-\infty}D_{x}\left(s,u\right)
=\frac{s}{s+u}$,
we receive (\ref{eq:3.1}).

Similarly, in case $\left(NS\right)_{+}$ or
 $\left(S\right)_{+}$
$\left(a<0\right)$:
\[
\frac{\mathsf{E}e^{-rX_{\theta_{s}}^{+}}}
{\mathsf{E}e^{-rX_{\theta_{s+u}}^{+}}}=
\frac{\rho_{N_{+}}^{+}\left(s\right)}{\rho_{N_{+}}^{+}
\left(s+u\right)}\left(1+\frac{\left(\boldsymbol{\rho}_{+}
\left(s+u\right)-\boldsymbol{\rho}_{+}\left(s\right)\right)
\mathbf{h}_{N_{+}-1}\left(r\right)}{\left(\boldsymbol{\rho}_{+}
\left(s\right),1\right)\mathbf{h}_{N_{+}}\left(r\right)}\right).
\]
 Hence,
\[
D'_{x}\left(s,u\right)=\frac{\rho_{N_{+}}^{+}\left(s\right)}
{\rho_{N_{+}}^{+}\left(s+u\right)}\left(\boldsymbol{\rho}_{+}
\left(s+u\right)-\boldsymbol{\rho}_{+}\left(s\right)\right)
e^{\mathbf{R}_{+}\left(s\right)x}\mathbf{e},\;
x>0,
\]
and taking into account that $\lim_{x\rightarrow+\infty}
D_{x}\left(s,u\right)=1$
we deduce (\ref{eq:3.2}).
\end{proof}
This statement generalize the representation of $D_{x}\left(s,u\right)$ known
for almost semi-continuous processes given in~\cite{Husak2011engl}.

Note that, for a non step-wise L\'evy process $\mathsf{P}\left\{ X_{t}=0\right\} =0$,
then by \cite[VI, Lemma~15]{Bertoin1996} for any $t\geq0$ the time
it spends in $\left[0,\infty\right)$ $Q_{0}\left(t\right)=
\int_{0}^{t}I\left\{ X_{v}\geq0\right\} dv$
and the instant of its last supremum $g_{t}=
\sup\left\{ v<t:X_{v}=X_{v}^{+}\right\} $
have the same law. Moreover, by \cite[Theorem 2.9]{Husak2011engl} $Q_{0}\left(t\right)$ and the time the maximum is achieved
$T_{t}=\inf\left\{ v>0:X_{v}=X_{v}^{+}\right\} $ also have the same
law. Hence, the results of Theorem \ref{thm:3.1} define the moment
generating functions of $g_{\theta_{s}}$ and $T_{\theta_{s}}$.

Let $L\left(t\right)$ be the local time in $\left[0,t\right]$ that
$X_{t}^{+}-X_{t}$ spends at zero and $$L^{-1}\left(t\right)=
\inf\left\{ v>0:L\left(v\right)>t\right\} $$
is the inverse local time (for details, see \cite[VI]{Bertoin1996}).
Denote by $\kappa\left(s,r\right)$ the Laplace exponent of the so
 called
ladder process $\left\{ L^{-1},X_{L^{-1}}\right\}$:
 $$e^{-\kappa\left(s,r\right)}=\mathsf{E}\left[e^{-sL^{-1}
\left(1\right)-rX_{L^{-1}\left(1\right)}},1<L_{\infty}\right].$$
According to \cite[VI, (1)]{Bertoin1996}:
\[
\mathsf{E}e^{-rX_{\theta_{s}}^{+}-ug_{\theta_{s}}}=
\frac{\kappa\left(s,0\right)}{\kappa\left(s+u,r\right)}.
\]
Assuming that the normalization constant of the local time is 1, we
can deduce that $\kappa\left(s,0\right)=
\mathsf{E}e^{-\left(1-s\right)g_{\theta_{s}}}$.
Taking into account that for non step-wise processes
$Q_{0}\left(\theta_{s}\right)$
and $g_{\theta_{s}}$ have the same distribution we can
write that
\begin{equation}\label{eq:3.4}
\kappa\left(s,r\right)=\frac{D_{0}\left(s,1-s\right)}
{\mathsf{E}e^{-rX_{\theta_{s}}^{+}}}.
\end{equation}
Hence, using Theorem \ref{thm:2.1} and Theorem \ref{thm:3.1}, we can deduce
the following statement.
\begin{cor}\label{cor:3.1}
If one of the cases $\left(NS\right)_{-}$
 or $\left(S\right)_{-}$
$\left(a>0\right)$ holds, then
\begin{multline*}
\kappa\left(s,-r\right)=\frac{\rho_{N_{-}}^{-}\left(1\right)}
{\rho_{N_{-}}^{-}\left(s\right)}\times\\\times\left(s-r\left(A_{*}^{-}\left(s\right)
+\mathsf{E}e^{rX_{\theta_{s}}^{-}}\tilde{\Pi}^{+}\left(r\right)
-\mathbf{q}_{-}\left(s\right)\left(r\mathbf{I}+
\mathbf{R}_{-}\left(s\right)\right)^{-1}
\tilde{\Pi}^{+}\left(-\mathbf{R}_{-}\left(s\right)\right)
\mathbf{e}\right)\right).
\end{multline*}

If $X_{t}$ is a compound Poisson process with negative drift $a<0$,
 without gaussian part $\left(\sigma=0\right)$, and with bilateral
ME distributed jumps, then
\[
\kappa\left(s,r\right)=\frac{\rho_{d_{-}+1}^{-}\left(1\right)}
{\rho_{d_{-}+1}^{-}\left(s\right)}\frac{\beta_{d_{+}}^{+}}
{\rho_{d_{+}}^{+}\left(s\right)}\left(1+\frac{\left(\boldsymbol{\rho}_{+}
\left(s\right)-\boldsymbol{\beta}_{+}\right)\mathbf{h}_{d_{+}-1}
\left(r\right)}{\left(\boldsymbol{\rho}_{+}\left(s\right),1\right)
\mathbf{h}_{d_{+}}\left(r\right)}\right).
\]
\end{cor}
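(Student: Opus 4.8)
The plan is to derive both assertions from the single relation (\ref{eq:3.4}), $\kappa(s,r)=D_0(s,1-s)/\mathsf{E}e^{-rX_{\theta_s}^+}$, feeding into it the closed forms for $D_0$ supplied by Theorem~\ref{thm:3.1} and the closed forms for the killed-extremum transforms supplied by Theorem~\ref{thm:2.1} and Corollary~\ref{cor:2.1}. Throughout I will use that, under the chosen normalization $\kappa(1,0)=1$, one has $\kappa(s,0)=D_0(s,1-s)$.

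For the first assertion I would replace $r$ by $-r$ in (\ref{eq:3.4}) to get $\kappa(s,-r)=D_0(s,1-s)/\mathsf{E}e^{rX_{\theta_s}^+}$. Under $(NS)_-$ or $(S)_-$ with $a>0$, the first display of Theorem~\ref{thm:3.1} evaluated at $u=1-s$ (so that $s+u=1$) gives $D_0(s,1-s)=s\,\rho_{N_-}^-(1)/\rho_{N_-}^-(s)$, while $\mathsf{E}e^{rX_{\theta_s}^+}$ is exactly the quantity whose reciprocal appears in (\ref{eq:2.4}). Taking reciprocals there yields $1/\mathsf{E}e^{rX_{\theta_s}^+}=1-\tfrac{r}{s}\bigl(A_*^-(s)+\mathsf{E}e^{rX_{\theta_s}^-}\tilde{\Pi}^+(r)-\mathbf{q}_-(s)(r\mathbf{I}+\mathbf{R}_-(s))^{-1}\tilde{\Pi}^+(-\mathbf{R}_-(s))\mathbf{e}\bigr)$; multiplying by $s\,\rho_{N_-}^-(1)/\rho_{N_-}^-(s)$ absorbs the prefactor $s$ into the bracket and reproduces the stated expression for $\kappa(s,-r)$. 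This part is pure substitution once (\ref{eq:3.4}), Theorem~\ref{thm:3.1} and (\ref{eq:2.4}) are in hand.

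For the second assertion I would first observe that a compound Poisson process with $\sigma=0$, $a<0$ and bilateral ME jumps satisfies $(NS)_-$ (hence $N_-=d_-+1$) and $(S)_+$ (hence $N_+=d_+$) simultaneously, so both displays for $D_0$ in Theorem~\ref{thm:3.1} apply and must coincide; at $u=1-s$ this is the Wiener--Hopf consistency $\kappa(s,0)=D_0(s,1-s)=s\,\rho_{d_-+1}^-(1)/\rho_{d_-+1}^-(s)=\rho_{d_+}^+(s)/\rho_{d_+}^+(1)$. Because $(S)_+$ holds, the dual of (\ref{eq:2.6})--(\ref{eq:2.7}) gives $\mathsf{E}e^{-rX_{\theta_s}^+}=\tfrac{\rho_{d_+}^+(s)}{\beta_{d_+}^+}\cdot\tfrac{(\boldsymbol{\beta}_+,1)\mathbf{h}_{d_+}(r)}{(\boldsymbol{\rho}_+(s),1)\mathbf{h}_{d_+}(r)}$, the killed supremum carrying an atom $p_+(s)=\rho_{d_+}^+(s)/\beta_{d_+}^+$ at the origin. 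I would substitute this transform and the value of $\kappa(s,0)$ into (\ref{eq:3.4}), then rewrite the resulting ratio of companion polynomials as $1$ plus a proper fraction via the degree-lowering identity $(\boldsymbol{\rho}_+(s),1)\mathbf{h}_{d_+}(r)-(\boldsymbol{\beta}_+,1)\mathbf{h}_{d_+}(r)=(\boldsymbol{\rho}_+(s)-\boldsymbol{\beta}_+)\mathbf{h}_{d_+-1}(r)$, collapsing everything to the advertised rational function in $r$.

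The hard part will be this final simplification in the second assertion. One must invoke the consistency relation between the two $D_0$ formulas to cancel the $\rho_{d_+}^+(s)$ coming from $\mathsf{E}e^{-rX_{\theta_s}^+}$ against those in $\kappa(s,0)$, and simultaneously keep the degree-lowering identity, the atom $p_+(s)$, and the normalization $\kappa(1,0)=1$ all straight, so that the prefactor reduces correctly and the proper fraction lands on the intended $\mathbf{h}_{d_+-1}$ and $\mathbf{h}_{d_+}$ vectors. The delicate bookkeeping here is where a stray factor of $s$ or an interchange of the $(\boldsymbol{\beta}_+,1)$ and $(\boldsymbol{\rho}_+(s),1)$ denominators is easiest to slip in, so I would verify the end formula at $r=0$ against $\kappa(s,0)$ as a consistency check.
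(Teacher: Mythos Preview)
Your proposal is correct and follows exactly the route the paper intends: the paper's own ``proof'' consists solely of the sentence preceding the corollary, namely that (\ref{eq:3.4}) combined with Theorem~\ref{thm:2.1} and Theorem~\ref{thm:3.1} yields the result, and you have spelled out precisely those substitutions. Your observation that the second assertion places the process simultaneously in $(NS)_-$ (so $N_-=d_-+1$) and $(S)_+$ (so $N_+=d_+$), together with the dual of (\ref{eq:2.7}) for $\mathsf{E}e^{-rX_{\theta_s}^+}$, is the right way to unpack it; the paper supplies no further detail than this.
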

The next statement applies Theorem \ref{thm:3.1} and
 Corollary~\ref{cor:2.3}
to get a representation of the moment generating function of the total
sojourn time over a level $D_{x}\left(0,u\right)=
\mathsf{E}e^{-u\int_{0}^{\infty}I\left\{ X_{t}>x\right\} dt}$,
which in risk theory defines the time in risk zone
(for details, see
\cite{Husak2011engl}).
\begin{cor}\label{cor:3.2}
If one of the cases $\left(NS\right)_{-}$ or $\left(S\right)_{-}$
$\left(a>0\right)$ holds and $\mu=\mathsf{E}X_{1}<0$, then
\[
\mathsf{E}e^{-u\int_{0}^{\infty}I\left\{ X_{t}>0\right\} dt}
=\frac{|\mu|}{u}\frac{\prod_{i=1}^{N_{-}}r_{i}^{-}
\left(u\right)}{\prod_{i=2}^{N_{-}}r_{i}^{-}},
\]
\begin{equation}\label{eq:3.5}
\mathsf{E}e^{-u\int_{0}^{\infty}I\left\{ X_{t}>x\right\} dt}
=\frac{|\mu|\prod_{i=1}^{N_{-}}r_{i}^{-}\left(u\right)}
{u\prod_{i=2}^{N_{-}}r_{i}^{-}}\left(\boldsymbol{\rho}_{-}
\left(u\right)-\boldsymbol{\rho}_{-}\left(0\right)\right)
\mathbf{R}_{-}^{-1}\left(u\right)e^{\mathbf{R}_{-}\left(u\right)x}
\mathbf{e},x<0.
\end{equation}
The integral transform of the moment generating function of the sojourn
time over a positive level has the next representation
\begin{multline}\label{eq:3.6}
\int_{-0}^{+\infty}e^{rx}D'_{x}\left(0,u\right)dx+
D_{0}(0,u)=\frac{\mathsf{E}e^{rX^{+}}}
{\mathsf{E}e^{rX_{\theta_{u}}^{+}}}=\\
=\frac{1-\frac{r}{u}\left(A_{*}^{-}\left(u\right)
+\mathsf{E}e^{rX_{\theta_{u}}^{-}}\tilde{\Pi}^{+}
\left(r\right)-\mathbf{q}_{-}\left(u\right)
\left(r\mathbf{I}+\mathbf{R}_{-}\left(u\right)\right)^{-1}
\tilde{\Pi}^{+}\left(-\mathbf{R}_{-}\left(u\right)\right)
\mathbf{e}\right)}{1-r\left(A'_{-}+p'_{-}
\tilde{\Pi}^{+}\left(r\right)+\mathbf{q}'_{-}\left(r\mathbf{I}+
\mathbf{R}_{-}\left(0\right)\right)^{-1}
\left(\tilde{\Pi}^{+}\left(r\right)\mathbf{I}-\tilde{\Pi}^{+}
\left(-\mathbf{R}_{-}\left(0\right)\right)\right)\mathbf{e}\right)}.
\end{multline}
If for the process $X_t$: $\sigma=0,a\leq0$, negative jumps
have a ME distribution and $\mu<0$, then
\begin{multline}\label{eq:3.7}
\mathsf{E}e^{-u\int_{0}^{\infty}I\left\{ X_{t}>x\right\} dt}=
\mathsf{P}\left\{ X^{+}<x\right\} +
\frac{p_{+}\left(u\right)}{u}\bigg(p_{-}\left(u\right)
\int_{0}^{x}\overline{\Pi}^{+}\left(x-z\right)
\mathsf{P}\left\{ X^{+}\in dz\right\} +\\
+\mathbf{q}_{-}\left(u\right)\int_{0}^{\infty}
\overline{\Pi}^{+}\left(y\right)\int_{\max\left\{ 0,x-y\right\} }^{x}
e^{\mathbf{R}_{-}\left(u\right)(x-y-z)}\mathsf{P}\left\{ X^{+}\in dz\right\}
 \mathbf{e} dy\bigg).
\end{multline}
\end{cor}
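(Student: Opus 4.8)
The three assertions share one engine: take the killed-time identities of Theorem~\ref{thm:3.1} and of \eqref{eq:3.3}, send $s\to0$, and invert in $r$. I would treat them in turn.

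For $D_0(0,u)$ and \eqref{eq:3.5} I would pass to the limit $s\to0$ directly in the formulas of Theorem~\ref{thm:3.1}. The only non-trivial limit is the scalar $s/\rho_{N_-}^-(s)$, which is of the form $0/0$; using the root asymptotics recorded just before Corollary~\ref{cor:2.3} (for $\mu<0$ one has $r_1^-(s)\sim s/|\mu|$ and $r_i^-(s)\to r_i^-$ for $i\ge2$) gives $s/\rho_{N_-}^-(s)\to|\mu|/\prod_{i=2}^{N_-}r_i^-$. Multiplying $D_0(s,u)$ and the bracket of \eqref{eq:3.1} by this limit, while the loose prefactor $s/(s+u)\to0$, yields exactly the stated $D_0(0,u)$ and \eqref{eq:3.5}; the remaining ingredients $\boldsymbol\rho_-(s),\mathbf R_-(s)$ and the exponential converge to their values at $s=0$ by continuity.

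For the transform \eqref{eq:3.6} I would let $s\to0$ in the first line of \eqref{eq:3.3}. Since $\theta_0=\infty$ one has $X_{\theta_s}^+\to X^+$, so the left side becomes $\int_{-0}^{+\infty}e^{rx}D'_x(0,u)\,dx+D_0(0,u)$ and the right side becomes $\mathsf Ee^{rX^+}/\mathsf Ee^{rX_{\theta_u}^+}$. Inserting \eqref{eq:2.4} (at $s=u$) for the denominator and \eqref{eq:2.16} for the numerator, both written as $(1-\cdots)^{-1}$, the quotient collapses to the ratio of the two bracketed expressions, which is precisely \eqref{eq:3.6}.

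The explicit formula \eqref{eq:3.7} is the substantive case. The derivation of \eqref{eq:3.3} is general, so the identity $\int_{-0}^{+\infty}e^{rx}D'_x(0,u)\,dx+D_0(0,u)=\mathsf Ee^{rX^+}/\mathsf Ee^{rX_{\theta_u}^+}$ persists when $\sigma=0,\,a\le0$, where moreover $A_*^-\equiv A'_-\equiv0$. Writing the right side as $\mathsf Ee^{rX^+}\bigl(\mathsf Ee^{rX_{\theta_u}^+}\bigr)^{-1}$ and reading $\bigl(\mathsf Ee^{rX_{\theta_u}^+}\bigr)^{-1}=1-\tfrac ru\,G(r)$ off \eqref{eq:2.4}, the key point is that $G(r)$ is exactly the transform already evaluated inside the proof of Theorem~\ref{thm:2.1} (now at $s=u$): it is the Laplace transform of
\[
g(x)=p_-(u)\,\overline\Pi^+(x)+\mathbf q_-(u)\int_0^{\infty}\overline\Pi^+(x+w)\,e^{-\mathbf R_-(u)w}\mathbf e\,dw .
\]
Thus the occupation transform is $\mathsf Ee^{rX^+}-\tfrac ru\,\mathsf Ee^{rX^+}G(r)$. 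Inverting termwise, the first piece returns the ``distribution function'' $\mathsf P\{X^+<x\}$, while the second is the convolution of $\mathsf P\{X^+\in dz\}$ with $g$, the factor $r$ being absorbed when the density $D'_x$ is integrated back up to $D_x$. The change of variable $w=y-(x-z)$ then turns the inner integral of $g$ into the truncated range $[\max\{0,x-y\},x]$ with kernel $e^{\mathbf R_-(u)(x-y-z)}$, reproducing the two $\overline\Pi^+$-convolutions in \eqref{eq:3.7}.

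The structural match above is clean; the genuine work is the scalar prefactor together with the mass at $0$. Tracking the $\delta_0$-contributions generated when the factor $r$ is inverted fixes $D_0(0,u)$, and its consistency with the $x\downarrow0$ limit of the convolution hinges on the identity $p_+(u)^{-1}=1+g(0)/u$, where $g(0)=p_-(u)\,\overline\Pi^+(0+)+\mathbf q_-(u)\,\tilde\Pi^+(-\mathbf R_-(u))\mathbf e$; this is just the $r\to-\infty$ reading of \eqref{eq:2.4}. Reconciling the bare constant produced by the inversion with the prefactor $p_+(u)/u$ displayed in \eqref{eq:3.7}—equivalently, checking whether $\mathsf P\{X^+\in dz\}$ there is meant as the free or the $\theta_u$-killed supremum—is the step I expect to be most delicate. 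As a final check I would re-expand the Laplace transform of the right side of \eqref{eq:3.7} and confirm that it collapses to $\mathsf Ee^{rX^+}/\mathsf Ee^{rX_{\theta_u}^+}$, which reduces the whole identity to the algebra already used for \eqref{eq:3.6}.
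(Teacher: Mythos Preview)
Your treatment of $D_0(0,u)$, \eqref{eq:3.5} and \eqref{eq:3.6} coincides with the paper's: take $s\to0$ in Theorem~\ref{thm:3.1} using the root asymptotics before Corollary~\ref{cor:2.3}, and for \eqref{eq:3.6} let $s\to0$ in the first line of \eqref{eq:3.3} and substitute \eqref{eq:2.4} and \eqref{eq:2.16}.

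For \eqref{eq:3.7} your route diverges from the paper's and has a real gap. You base the argument on \eqref{eq:3.3}, asserting that ``the derivation of \eqref{eq:3.3} is general''; but the paper states \eqref{eq:3.3} only for \emph{non step-wise} processes, and the hypothesis of \eqref{eq:3.7} includes the step-wise case $\sigma=0,\,a=0$. Even granting \eqref{eq:3.3}, your inversion is left unfinished: you correctly identify $g(x)=\int_{-\infty}^{0}\overline\Pi^{+}(x-y)\,dP_{-}(u,y)$ as the object whose transform appears in $(\mathsf{E}e^{rX_{\theta_u}^{+}})^{-1}$, but the passage from $\mathsf{E}e^{rX^{+}}-\tfrac{r}{u}\mathsf{E}e^{rX^{+}}G(r)$ back to $D_x$ (absorbing the factor $r$, handling the atom at $0$, and producing the prefactor $p_{+}(u)/u$) is precisely the step you flag as ``most delicate'' and do not carry out. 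Your own uncertainty about whether $\mathsf{P}\{X^{+}\in dz\}$ denotes the free or the killed supremum shows the argument is not closed.

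The paper avoids both difficulties by starting from a different identity. Since $\sigma=0,\,a\le0$ makes $\{\tau_0^{+},X_{\tau_0^{+}}\}$ non-degenerate, it invokes the prelimit Pollaczek--Khinchin formula (\cite[Theorem~2.4]{Husak2011engl}) to obtain, already for finite $s$,
\[
D_{x}(s,u)=P_{+}(s,x)+\int_{0}^{x}\mathsf{P}\{X_{\theta_{s+u}}^{+}>0,\,X_{\tau_0^{+}}>x-z\}\,dP_{+}(s,z),\qquad x>0,
\]
and then uses \cite[Corollary~2.3]{Husak2011engl} to write $\mathsf{P}\{X_{\theta_{s+u}}^{+}>0,\,X_{\tau_0^{+}}>z\}=\dfrac{p_{+}(s+u)}{s+u}\displaystyle\int_{-\infty}^{0}\overline\Pi^{+}(z-y)\,dP_{-}(s+u,y)$. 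The prefactor $p_{+}(s+u)/(s+u)$ therefore appears directly from the probabilistic identity rather than from any Laplace inversion, and inserting the ME form \eqref{eq:2.2}--\eqref{eq:2.3} of $dP_{-}(s+u,\cdot)$ followed by $s\to0$ yields \eqref{eq:3.7} with $\mathsf{P}\{X^{+}\in dz\}$ unambiguously the \emph{free} supremum. That Pollaczek--Khinchin step is the missing idea in your argument.
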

\begin{proof} Equality (\ref{eq:3.5}) follows by taking the limit as $s\rightarrow 0$
in (\ref{eq:3.1}). Formula (\ref{eq:3.6}) is a straightforward consequence
of formulas (\ref{eq:2.4}), (\ref{eq:2.16}) and (\ref{eq:3.3}).

If $\sigma=0,a\leq0$, then $\left\{ \tau_{0}^{+},X_{\tau_{0}^{+}}\right\} $
has non degenerate joint distribution. Applying prelimit generalization
of the Pollaczek-Khinchin formula (\cite[Theorem 2.4]{Husak2011engl}),
we get
\[
\int_{-0}^{+\infty}e^{rx}D'_{x}\left(s,u\right)dx+
D_{+0}(s,u)=\frac{\mathsf{E}e^{rX_{\theta_{s}}^{+}}}
{\mathsf{P}\left\{ X_{\theta_{s+u}}^{+}=0\right\} }
\left(1-\mathsf{E}\left[e^{-\left(s+u\right)\tau_{0}^{+}
+rX_{\tau_{0}^{+}}},\tau_{0}^{+}<\infty\right]\right).
\]
Whence $D_{+0}(s,u)=\frac{\mathsf{P}\left\{ X_{\theta_{s}}^{+}
=0\right\} }{\mathsf{P}\left\{ X_{\theta_{s+u}}^{+}=0\right\}}$
and for $x>0$
\[
D_{x}\left(s,u\right)=
P_{+}\left(s,x\right)+\int_{0}^{x}
\mathsf{P}\left\{ X_{\theta_{s+u}}^{+}>0,
X_{\tau_{0}^{+}}>x-z\right\} dP_{+}\left(s,z\right).
\]
Due to \cite[Corollary 2.3]{Husak2011engl}:
\[
\mathsf{P}\left\{ X_{\theta_{s+u}}^{+}>0,X_{\tau_{0}^{+}}>z\right\}
=\frac{\mathsf{P}\left\{ X_{\theta_{s+u}}^{+}=0\right\} }
{s+u}\int_{-\infty}^{0}
\overline{\Pi}^{+}\left(z-y\right)dP_{-}\left(s+u,y\right).
\]
 If negative jumps have the ME distribution, then
\begin{multline*}
D_{x}\left(s,u\right)
=P_{+}\left(s,x\right)+\frac{p_{+}\left(s+u\right)}{s+u}
\bigg(p_{-}\left(s+u\right)\int_{0}^{x}\overline{\Pi}^{+}
\left(x-z\right)dP_{+}\left(s,z\right)+\\
+\mathbf{q}_{-}\left(s+u\right)\int_{0}^{\infty}\overline{\Pi}^{+}
\left(y\right)\int_{\max\left\{ 0,x-y\right\} }^{x}
e^{\mathbf{R}_{-}\left(s+u\right)(x-y-z)}\mathbf{e}dP_{+}\left(s,z\right)dy\bigg).
\end{multline*}
from here as $s\rightarrow0$ relation (\ref{eq:3.7}) follows.
\end{proof}

Note that, for the step-wise $\left(a=0\right)$ almost
lower semi-continuous
processes formula (\ref{eq:3.7}) is reduced to
\begin{multline*}
\mathsf{E}e^{-u\int_{0}^{\infty}I\left\{ X_{t}>x\right\} dt}=
\mathsf{P}\left\{ X^{+}<x\right\} +\frac{1}{u+\lambda}
\bigg(\int_{0}^{x}\overline{\Pi}^{+}\left(x-z\right)
\mathsf{P}\left\{ X^{+}\in dz\right\} +\\
+\left(b_{1}-r_{1}^{-}\left(u\right)\right)
\int_{0}^{\infty}\overline{\Pi}^{+}\left(y\right)
\int_{\max\left\{ 0,x-y\right\} }^{x}e^{r_{1}^{-}\left(u\right)(x-y-z)}
\mathsf{P}\left\{ X^{+}\in dz\right\} dy\bigg).
\end{multline*}

If negative (positive) jumps have hyperexponential distribution, that
is, if we have additional condition that $b_{m_{-}}>\ldots>b_{2}>b_{1}>0$
($c_{m_{+}}>\ldots>c_{2}>c_{1}>0$), then the roots of the cumulant
equation $\left\{ -r_{i}^{-}\left(s\right)\right\} _{i=1}^{N_{-}}$
$\left(\left\{ r_{i}^{+}\left(s\right)\right\} _{i=1}^{N_{+}}\right)$
are real and distinct (see \cite{Lewis2008}), and the matrix exponents in Theorem~\ref{thm:3.1}
can be simplified.
\begin{cor}
\label{cor:3.3} If one of the cases $\left(NS\right)_{-}$ or
$\left(S\right)_{-}$
$\left(a>0\right)$ holds and $b_{m_{-}}>\ldots>b_{1}>0$, then
\[
D_{0}(s,u)=\frac{s}{s+u}\prod_{i=1}^{N_{-}}
\frac{r_{i}^{-}\left(s+u\right)}{r_{i}^{-}\left(s\right)},
\]
\begin{equation}\label{eq:3.8}
D_{x}\left(s,u\right)=\frac{s}{s+u}\left(1+\sum_{k=1}^{N_{-}}
\frac{\prod_{i=1}^{N_{-}}\left(r_{k}^{-}\left(s+u\right)/r_{i}^{-}
\left(s\right)-1\right)}{\prod_{i=1,i\neq k}^{N_{-}}
\left(r_{k}^{-}\left(s+u\right)/r_{i}^{-}\left(s+u\right)-1\right)}
e^{r_{k}^{-}\left(s+u\right)x}\right),x<0.
\end{equation}
For $\mu<0$
\[
D_{0}(0,u)=\frac{|\mu|}{u}
\prod_{i=2}^{N_{-}}\frac{r_{i}^{-}\left(u\right)}{r_{i}^{-}},
\]
\begin{equation}\label{eq:3.9}
D_{x}\left(0,u\right)=\frac{|\mu|}{u}\sum_{k=1}^{N_{-}}
\frac{\prod_{i=2}^{N_{-}}\left(r_{k}^{-}\left(u\right)/r_{i}^{-}-1\right)}
{\prod_{i=1,i\neq k}^{N_{-}}\left(r_{k}^{-}\left(u\right)/
r_{i}^{-}\left(u\right)-1\right)}
r_{k}^{-}\left(u\right)e^{r_{k}^{-}\left(u\right)x},x<0.
\end{equation}

If one of the cases $\left(NS\right)_{+}$ or $\left(S\right)_{+}$
$\left(a<0\right)$ holds and $c_{m_{+}}>\ldots>c_{1}>0$, then
\[
D_{0}(s,u)=\prod_{i=1}^{N_{+}}\frac{r_{i}^{+}\left(s\right)}
{r_{i}^{+}\left(s+u\right)},
\]
\begin{equation}\label{eq:3.10}
D_{x}\left(s,u\right)=1-\sum_{k=1}^{N_{+}}
\frac{\prod_{i=1}^{N_{+}}\left(1-r_{k}^{+}\left(s\right)/r_{i}^{+}
\left(s+u\right)\right)}{\prod_{i=1,i\neq k}^{N_{+}}
\left(1-r_{k}^{+}\left(s\right)/r_{i}^{+}
\left(s\right)\right)}e^{-r_{k}^{+}\left(s\right)x}\:,x>0.
\end{equation}
For $\mu>0$ and $x\geq0$: $D_{x}\left(0,u\right)=0,$ in other words
$\mathsf{P}\left\{ \int_{0}^{+\infty}I\left\{ X_{t}>x\right\}
 dt=+\infty\right\} =1$.
\end{cor}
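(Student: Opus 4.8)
The plan is to use the fact, recalled from \cite{Lewis2008}, that under the hyperexponential hypothesis $b_{m_-}>\ldots>b_1>0$ (respectively $c_{m_+}>\ldots>c_1>0$) the roots $\{r_i^-(s)\}_{i=1}^{N_-}$ (respectively $\{r_i^+(s)\}_{i=1}^{N_+}$) are real and pairwise distinct. Then the companion matrix $\mathbf{R}_-(s+u)$ in Theorem~\ref{thm:3.1} has simple eigenvalues and is diagonalizable, so the matrix exponential $e^{\mathbf{R}_-(s+u)x}$ in (\ref{eq:3.1}) reduces to a scalar sum of the terms $e^{r_k^-(s+u)x}$. Rather than diagonalize the matrix directly, I would carry out the reduction at the level of the rational transforms, where the simple poles keep the bookkeeping transparent; the matrix diagonalization produces the same coefficients through the Vandermonde eigenstructure of the companion matrix and serves only as a cross-check.

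Concretely, I would start from the ratio of moment generating functions appearing in the proof of Theorem~\ref{thm:3.1}. By (\ref{eq:2.6}) the factors carrying the $b_i$ cancel between numerator and denominator, leaving
\[
\frac{\mathsf{E}e^{rX_{\theta_{s+u}}^-}}{\mathsf{E}e^{rX_{\theta_s}^-}}
=\frac{\prod_{i=1}^{N_-}r_i^-(s+u)}{\prod_{i=1}^{N_-}r_i^-(s)}\,
\frac{\prod_{i=1}^{N_-}\left(r+r_i^-(s)\right)}{\prod_{i=1}^{N_-}\left(r+r_i^-(s+u)\right)}.
\]
Since the poles $-r_k^-(s+u)$ are simple, the last factor expands as $1+\sum_k c_k\left(r+r_k^-(s+u)\right)^{-1}$ with residues $c_k$ given by the Heaviside (Lagrange) cover-up formula. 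Substituting into (\ref{eq:3.3}) and letting $r\to+\infty$, where the boundary integral vanishes and only the constant survives, yields $D_0(s,u)=\frac{s}{s+u}\prod_i r_i^-(s+u)/r_i^-(s)$. Inverting each simple pole through the correspondence between $e^{r_k^-(s+u)x}$ on $x<0$ and $\left(r+r_k^-(s+u)\right)^{-1}$ gives $D'_x(s,u)$ as a sum of exponentials; integrating once and matching $\lim_{x\to-\infty}D_x(s,u)=s/(s+u)$ produces (\ref{eq:3.8}) once the coefficients $\frac{\rho_{N_-}^-(s+u)}{\rho_{N_-}^-(s)}\frac{c_k}{r_k^-(s+u)}$ are rewritten as the normalized Lagrange products displayed there. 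The positive-jump formulas for $D_0(s,u)$ and (\ref{eq:3.10}) then follow by the duality $Y_t=-X_t$ exactly as in Corollary~\ref{cor:2.1}.

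For the stationary formulas I would pass to the limit $s\to0$ in the relations just obtained, using the root asymptotics recalled before Corollary~\ref{cor:2.3}: $r_i^-(s)\to r_i^-$ for $i\geq2$, while $r_1^-(s)\to0$ with $s/r_1^-(s)\to|\mu|$. The constant $s/(s+u)$ in (\ref{eq:3.8}) tends to $0$, and in each summand the $i=1$ factor $r_k^-(s+u)/r_1^-(s)-1$ of the numerator product blows up like $r_k^-(u)/r_1^-(s)$; this singularity is cancelled by the prefactor $s/(s+u)$ precisely through $s/r_1^-(s)\to|\mu|$, leaving the finite expression (\ref{eq:3.9}). The last assertion, that $D_x(0,u)=0$ for $\mu>0$ and $x\geq0$, needs no computation: by the strong law for L\'evy processes $X_t\to+\infty$ almost surely, hence $\int_0^\infty I\{X_t>x\}\,dt=+\infty$ almost surely and $D_x(0,u)=\mathsf{E}e^{-u\cdot\infty}=0$.

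I expect the main obstacle to be purely algebraic: verifying that the partial-fraction residues collapse exactly to the normalized Lagrange products
\[
\frac{\prod_{i=1}^{N_-}\left(r_k^-(s+u)/r_i^-(s)-1\right)}{\prod_{i=1,\,i\neq k}^{N_-}\left(r_k^-(s+u)/r_i^-(s+u)-1\right)},
\]
and then checking that the $1/r_1^-(s)$ singularities cancel uniformly in $k$ as $s\to0$. Both steps are routine but demand careful sign and index bookkeeping; all the probabilistic content is already supplied by Theorem~\ref{thm:3.1} and the root asymptotics.
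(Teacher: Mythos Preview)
Your approach is essentially the paper's: substitute the product form (\ref{eq:2.6}) into (\ref{eq:3.3}), perform a partial-fraction expansion at the simple poles $-r_k^-(s+u)$, invert, and integrate using the boundary value at $-\infty$; then pass to $s\to0$ via the root asymptotics. The paper states exactly this recipe in one sentence and leaves the algebra to the reader.

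Two small points where you diverge. First, your duality remark for the $+$ case is imprecise as written: the functional $D_x$ does not transform cleanly under $Y_t=-X_t$ (the occupation time of $\{Y_t>x\}$ becomes $\theta_s$ minus the occupation time of $\{X_t>-x\}$, not an occupation time of the same type). What duality does give you is the $+$ analogue of (\ref{eq:2.6}); you must then feed that into the \emph{first} relation in (\ref{eq:3.3}) (the one with $X^+$) and repeat the partial-fraction inversion. That is what the paper's ``similarly'' means. Second, for the final assertion $D_x(0,u)=0$ when $\mu>0$ and $x\ge0$, the paper does not invoke the strong law but instead takes the limit $s\to0$ directly in (\ref{eq:3.10}), checking that the $k$-th coefficient tends to $1$ for $k=1$ (with $r_1^+(0)=0$, hence $e^{-r_1^+(0)x}=1$) and to $0$ for $k\neq1$, so the sum collapses to $1$. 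Your strong-law argument is correct and more conceptual; the paper's computation has the virtue of staying within the formula and confirming internal consistency.
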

\begin{proof} If we have one of the cases $\left(NS\right)_{-}$
or $\left(S\right)_{-}$ $\left(a>0\right)$ and
$b_{m_{-}}>\ldots>b_{2}>b_{1}>0$, then
the roots $\left\{ -r_{i}^{-}\left(s\right)\right\} _{i=1}^{N_{-}}$
are real and distinct, and instead of using formula (\ref{eq:3.1})
it is more convenient to substitute relation (\ref{eq:2.6}) in (\ref{eq:3.3}) and invert with
respect to $r$. Similarly for the case $\left(NS\right)_{+}$ or $\left(S\right)_{+}$
$\left(a<0\right)$ and $c_{m_{+}}>\ldots>c_{2}>c_{1}>0$ we can deduce
formula (\ref{eq:3.10}). To get $D_{x}\left(0,u\right)$ in the corresponding
cases apply the limit behavior of the roots of cumulant equation
as $s\rightarrow0$. To find the limit as $s\rightarrow0$ in (\ref{eq:3.10}) for the case
$\left(NS\right)_{+}$ or $\left(S\right)_{+}$ and for $\mu>0$ we can use the relation
\[
\frac{\prod_{i=1}^{N_{+}}\left(1-r_{k}^{+}\left(s\right)/r_{i}^{+}
\left(s+u\right)\right)}{\prod_{i=1,i\neq k}^{N_{+}}
\left(1-r_{k}^{+}\left(s\right)/r_{i}^{+}
\left(s\right)\right)}\underset{s\rightarrow0}{\longrightarrow}
\begin{cases}
1 & k=1,\\
0 & k\neq1.
\end{cases}
\]
\end{proof}

Note that, using the results of \cite{Kuznetsov2012},
 Corollary \ref{cor:3.3}
could be generalized for the case of the so called meromorphic L\'evy
processes (the cumulant function is holomorphic except a
set of isolated points, the poles of the function),
 for which $N_{\pm}=\infty$
in (\ref{eq:3.8})--(\ref{eq:3.10}).


\begin{thebibliography}{10}
\bibitem{Kyprianou2006}
A. E. Kyprianou,
\textit{Introductory Lectures on Fluctuations of Levy Processes with Applications},
Springer,
New York,
{2006}.

\bibitem{Bratiychuk1990}
N. Bratiychuk and D. Husak,
\textit{Boundary-Values Problems for Processes with Independent Increments},
 Naukova Dumka,
 Kyiv,
1990 (in Russian).

\bibitem{Lewis2008}
A. L. Lewis and E. Mordecki,
\textit{Wiener-Hopf factorization for Levy processes having negative jumps with rational transforms},
J. of Appl. Prob.
\textbf{45}
(2008),
no.~1,
118--134.

\bibitem{Landriault2011}D. Landriault, J.-F.Renaud and X. Zhou,
\textit{Occupation times of spectrally negative L\'evy
processes with applications},
 Stochastic processes and their applications
\textbf{212(11)}
(2011),
 2629--2641.

\bibitem{Husak2011engl}
D. Husak, \textit{Processes with Independent Increments in Risk Theory},
Institute of Mathematics of the NAS of Ukraine, Kyiv,
2011 (in Ukrainian).

\bibitem{Kuznetsov2012}
A. Kuznetsov, A. E. Kyprianou, J. C. Pardo,
\textit{Meromorphic Levy processes and their fluctuation identities},
Ann. of Appl. Prob.
\textbf{22}
(2012),
no.~3,
1101--1135.

\bibitem{Asmussen2010}
S. Asmussen and  H. Albrecher,
\textit{Ruin Probabilities},
World Scientific,
Singapore,
2010.

\bibitem{Fackrell2003}
M. W. Fackrell, \textit{Characterization of Matrix-exponential Distributions},
PhD Thesis, The University of Adelaide, 2003.



\bibitem{Bladt2012}M. Bladt and B.F. Nielsen,
\textit{Multivariate matrix–-exponential distributions},
Stochastic Models,
\textbf{26}
(2010),
no.~1,
1-–26.

\bibitem{Bertoin1996}
J. Bertoin,
\textit{L\'evy Processes},
Cambridge Univ. Press,
Cambridge,
1996.


\end{thebibliography}
\end{document}